\definecolor{MyLinkColor}{rgb}{0,0,0.4}
\newcommand{\R}{{\mathbb R}}
\newcommand{\E}{{\mathcal E}}
\newcommand{\N}{{\mathbb N}}
\newcommand{\B}{\mathcal{B}}
\newcommand{\cK}{\mathcal{K}}
\newcommand{\p}{\partial}
\newcommand{\e}{\varepsilon}
\newtheorem{thm}{Theorem}[section]
\newtheorem{prop}[thm]{Proposition}
\newtheorem{lemma}[thm]{Lemma}
\newtheorem{defn}[thm]{Definition}
\theoremstyle{remark} 
\newtheorem{rem}[thm]{Remark}
\numberwithin{equation}{section}
\title[Finite speed of propagation and waiting time for a thin film Muskat problem]{Finite speed of propagation and waiting time for\\ a thin film Muskat problem}
\thanks{Partially supported by the French-German PROCOPE project 30718ZG}
\author[Ph. Lauren\c cot]{Philippe Lauren\c cot}
\address{Institut  de Math\'ematiques de Toulouse,  UMR 5219, Universit\'e de Toulouse, CNRS, F-31062  Toulouse cedex 9, France}
\email{laurenco@math.univ-toulouse.fr}
\author[B.--V. Matioc]{Bogdan--Vasile Matioc}
\address{Institut f{\"u}r Angewandte Mathematik, Leibniz Universit{\"a}t Hannover, Welfengarten~1, 30167 Hannover, Germany.}
\email{matioc@ifam.uni-hannover.de}
\subjclass[2010]{ 35K65; 35K40; 35B99;  35Q35}
\keywords{Finite speed of propagation; Waiting time; Degenerate parabolic system}
\begin{document}

\begin{abstract}
Finite speed  of propagation is established for  non-negative weak solutions to a thin film approximation of the two-phase Muskat problem. The temporal expansion rate of the support matches the scale invariance of the system.
Moreover, we determine sufficient conditions on the initial data for the occurrence of
waiting time phenomena. 
\end{abstract}

\maketitle

\section{Introduction and main results}\label{Sec1}

The  Muskat problem is a  complex free boundary model which was  proposed by Muskat \cite{Mu34} to describe the motion of two immiscible  fluids with different densities and viscosities in a porous medium with impermeable bottom 
(such as intrusion of water into oil).
In the limit of thin fluid layers it was shown in \cite{EMM12} that the   Muskat problem can be approximated by a strongly coupled parabolic system of equations which, when neglecting   surface tension effects,   reads as follows    
\begin{subequations} \label{eq:problem}
\begin{equation}\label{eq:S2}
\left\{
\begin{array}{rcl}
\p_t f & = & \p_x\left( f \p_x\left(  (1+R) f +R g\right)\right),\\[1ex]
\p_t g & = & R_\mu\p_x\left( g \p_x\left(  f + g \right)\right),
\end{array}
\right.
\qquad (t,x)\in (0,\infty)\times \R,
\end{equation}
and is  supplemented with   initial conditions
\begin{equation}\label{eq:bc1}
f(0)=f_0,\qquad g(0)=g_0, \quad x\in \R.
\end{equation}
\end{subequations}
The  constants $R$ and $R_\mu$  in \eqref{eq:S2}, which are assumed in this paper to be positive,  are defined as
\[
R:=\frac{\rho_+}{\rho_--\rho_+}\quad\text{and}\quad R_\mu:=\frac{\mu_-}{\mu_+}R,
\]
with $\rho_-$ and $\mu_-$ [resp. $\rho_+$ and $\mu_+$] denoting the density and viscosity of the lower fluid  
[resp. of the upper fluid].
This reduced model   retains only  the functions $f=f(t,x)\ge 0$ and $g=g(t,x)\ge 0$ as unknowns, where 
  $f $ is the thickness of the lower fluid layer and  $g $ is the thickness of the upper fluid layer, so that $f+g$ 
is the total height of the fluids.
When $R_\mu=R$ the system \eqref{eq:S2} is also a particular case of thin film models derived in  \cite{JM14}  in the context of seawater intrusion.

The system \eqref{eq:S2} is a degenerate parabolic system with a full diffusion matrix and it can be regarded as a
two-phase generalization of the porous medium equation. 
Among salient features of the latter are the finite speed of propagation and   waiting time phenomena. 
Recall that the former means that   the  support of solutions remains compact if it is initially compact, while a waiting time phenomenon refers to the situation   where the solution vanishes at a point of the boundary of the support of its initial condition for   some time.
Since the system \eqref{eq:S2} is degenerate and somewhat related to the porous medium equation,
  these two issues  are  questions which arise   naturally and the purpose of this paper is to provide an affirmative answer to both.

There is a huge  literature on  the finite speed of propagation for degenerate parabolic  equations and various methods 
have been developed to investigate this issue.
  In particular, for second order parabolic equations, such as the porous medium equation or the $p$-Laplacian equation,
for which the comparison principle is available, this property can be derived by comparison with suitable subsolutions and supersolutions, see \cite{Va07} and the references therein. 
This approach  however cannot be extended  to higher order equations or to systems,   
  and energy methods have been developed instead, see  \cite{ ADS02, Be96, DPGG03, Sh92} and the references therein.
 These methods were applied in particular to the thin film equation which is a fourth order degenerate parabolic equation
 and also work  for second order equations.
 A few applications to systems of equations can be found in the literature: finite speed of propagation and the occurrence of waiting time phenomena are shown in \cite{DGJ99} for the Poisson-Nernst-Planck system which is of diagonal type with lower
order coupling and in \cite{Fi13} for the parabolic-elliptic chemotaxis Keller-Segel system which one can view as a
nonlocal parabolic equation.  

As we shall see below the energy method is sufficiently flexible to be adapted to study   the strongly coupled degenerate parabolic system \eqref{eq:S2}.  
  Before stating our result let us introduce the notion of weak solution to   \eqref{eq:problem} to be used 
  hereafter.
Let $\cK$ denote  the 
positive cone  of the Banach space $L_1(\R, (1+x^2) dx) \cap L_2(\R)$ defined by 
\begin{equation}
\cK:=\left\{u\in L^1(\R, (1+x^2) dx) \cap L^2(\R)\,:\, u\ge 0 \right\}\!, \label{eq:sup2}
\end{equation}
and set $  \cK^2:= \cK\times\cK.$ 

\begin{defn}\label{Def1} 
Given    $(f_0,g_0)\in\cK^2,$  a pair    $(f,g):[0,\infty)\to  \cK^2$ is a weak solution to \eqref{eq:problem} if
\begin{itemize}
\item[$(i)$] $(f,g)\in L_\infty(0,\infty; L_2(\R;\R^2))$, $ (f,g) \in L_2(0,\infty;H^1(\R;\R^2))$,\\[-2ex]
\item[$(ii)$] $(f,g)\in C ([0,\infty);H^{-1}(\R;\R^2))$ with $(f,g)(0)=(f_0,g_0),$
\end{itemize}
and    $(f,g)$ solves the equations \eqref{eq:S2} in the following sense
\begin{equation}\label{T2}   
\left\{
\begin{aligned}
&\int_\R f(t)\ \xi\, dx-\int_\R f_0\ \xi\, dx + \int_0^t\int_\R f(\sigma) \left[ (1+R)\p_xf+R\p_xg\right](\sigma) \p_x\xi\, dx\, d\sigma=0,\\[1ex]
&\int_\R g(t)\ \xi\, dx - \int_\R g_0\ \xi\, dx+ R_\mu\ \int_0^t \int_\R g(\sigma) \left( \p_xf+ \p_xg \right)(\sigma)\p_x\xi\, dx\, d\sigma=0
\end{aligned}\right.
\end{equation}
for all $\xi\in C_0^\infty(\R) $ and $t\ge 0.$
\end{defn}
The existence of   weak solutions to \eqref{eq:problem}   is shown
in \cite{LM13}   by  
  a variational scheme. The proof   relies on the observation 
that the system \eqref{eq:S2} is a gradient   flow with respect to the $2$-Wasserstein metric of the following
energy functional 
\begin{equation}\label{eq:sup1}
\E(f,g):=\frac{1}{2}\int_\R \left[ f^2+R(f+g)^2 \right]\, dx.
\end{equation} 
This approach actually extends to the  two dimensional setting as well as to 
 a related fourth order degenerate  system  which is also a thin film approximation of the Muskat problem additionally incorporating surface tension effects   \cite{LM14}.
Let us point out that the uniqueness  of solutions to  \eqref{eq:problem} is an open problem.
 \pagebreak

The main results of this paper are the following.
\begin{thm}[Finite speed of propagation]\label{MT1} Let $(f,g)$ be a  weak  solution of \eqref{eq:problem}.
If   $(f,g)$ satisfies the local energy estimate
\begin{align} \label{WL2}
& \int_\R\big[ f ^2(T) + R (f+g)^2(T)\big]\zeta^2\, dx\nonumber\\
& \qquad + \int_0^T\int_\R \big(f \left| (1+R)\p_x f + R \p_xg \right|^2+ RR_\mu g  \left| \p_x f + \p_x g \right|^2\big)\zeta^2\, dx\, dt\nonumber \\ 
& \leq \int_\R\big[ f^2 (0) + R (f+g)^2(0)\big]\zeta^2\, dx \nonumber \\
& \qquad +4\int_0^T\int_\R \big[f \left( (1+R)f+Rg \right)^2+RR_\mu  g \left( f + g \right)^2\big] |\p_x\zeta|^2\, dx\, dt
\end{align}
for all $\zeta\in W^1_4(\R)$ as well as for $\zeta\equiv 1$, then  $(f,g)$ has finite speed of propagation.
More precisely, if $a\geq0, r_0>0,$  and ${\rm supp\, } (f_0+g_0)\,\cap\,(a-r_0,a+r_0)=\emptyset$, 
then there exists a positive constant $C_*=C_*(R,R_\mu)$ such that  
\[{\rm supp \,} (f(T)+g(T))\,\cap\, (a-r_0/2,a+r_0/2)=\emptyset
\qquad\text{for all $T\in\big(0, C_* r_0^{5/2}/ \E^{1/2}(f_{0},g_{0})\big]$}.\]
In particular, if ${\rm supp\,} (f_0+g_0)\subset [-b_0,b_0]$, with $b_0>0$,  then there exists a positive constant $C^*=C^*(R_\mu,R,f_0,g_0)$ such that  
\[{\rm supp \,} (f(T)+g(T))\subset\big[ -b_0 -C^* T^{1/3} ,b_0+C^* T^{1/3}  \big]\qquad\text{for all $T>0$.}\]
\end{thm}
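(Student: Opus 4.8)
The plan is to run a De Giorgi–type iteration on a family of shrinking spatial cutoffs, fed by the local energy inequality \eqref{WL2}, and to close the iteration by interpolating the low-order term on the right-hand side against the high-order dissipation term on the left. First I would fix $a$ and $r_0$ as in the statement and, for $n\ge 0$, introduce radii $r_n:=r_0/2+r_0/2^{n+1}$ decreasing to $r_0/2$, together with smooth cutoffs $\zeta_n\in W^1_4(\R)$ that equal $1$ on $(a-r_n,a+r_n)^c$ (i.e. outside the interval), vanish on $(a-r_{n+1},a+r_{n+1})$, and satisfy $|\p_x\zeta_n|\le C\,2^n/r_0$. Testing \eqref{WL2} with $\zeta=\zeta_n$ on a time interval $(0,T)$ and using that $\mathrm{supp}\,(f_0+g_0)$ misses $(a-r_0,a+r_0)$, the initial term drops out, and one is left with an inequality of the form
\begin{align*}
\sup_{t\in(0,T)} E_n(t) + D_n \le \frac{C\,4^n}{r_0^2}\int_0^T\!\!\int_{\{\zeta_n>0\}}\!\big[f\big((1+R)f+Rg\big)^2 + R R_\mu\, g\,(f+g)^2\big]\,dx\,dt,
\end{align*}
where $E_n(t):=\int_\R[f^2(t)+R(f+g)^2(t)]\zeta_n^2\,dx$ is the localized energy and $D_n$ the localized dissipation.

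The second step is to control the right-hand cubic integrand by $E_n$ and $D_n$. Pointwise, $f\big((1+R)f+Rg\big)^2 + R R_\mu g(f+g)^2 \le C(R,R_\mu)\,(f+g)\,(f^2+(f+g)^2)$; writing $u:=f+g$ schematically, the term is bounded by $C\,u\cdot(\text{energy density})$. Using Hölder in $x$ to split off the factor $u=f+g$, then the Gagliardo–Nirenberg inequality $\|u\zeta_n^{1/2}\|_{L_\infty}\lesssim \|u\zeta_n^{1/2}\|_{L_2}^{1/2}\,\|\p_x(u\zeta_n^{1/2})\|_{L_2}^{1/2} + \|u\zeta_n^{1/2}\|_{L_2}$, and absorbing $\p_x\zeta_n$-errors (which again cost a factor $4^n/r_0^2$), one obtains a bound of the form
\[
\text{RHS} \;\le\; \frac{C\,4^n}{r_0^2}\,\Big(\sup_{(0,T)}E_n\Big)^{\theta}\Big(D_{n-1}+\tfrac{4^n}{r_0^2}\int_0^T\!\!\int E_{n-1}\Big)^{1-\theta}\,T^{\beta}
\]
for suitable exponents $\theta,\beta\in(0,1)$ dictated by the scaling; here it is crucial that the interpolation is done carefully so that the power of $r_0$ and the power of $T$ combine, via Young's inequality, into the scale-invariant quantity $T\,\E^{1/2}(f_0,g_0)/r_0^{5/2}$. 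After a Young inequality to absorb the $\sup E_n$ and $D_n$ pieces into the left side (at level $n$) at the price of enlarging the constant, one arrives at a recursive estimate
\[
Y_n \;\le\; C\,b^n\,\Big(\frac{T\,\E^{1/2}(f_0,g_0)}{r_0^{5/2}}\Big)^{\delta}\,Y_{n-1}^{1+\kappa},
\qquad Y_n:=\sup_{(0,T)}E_n + D_n + \tfrac{1}{r_0^2}\int_0^T\!\!\int E_n,
\]
for some $b>1$, $\delta>0$, $\kappa>0$.

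The third step is the standard fast-geometric-convergence lemma: if $T\,\E^{1/2}(f_0,g_0)/r_0^{5/2}$ is smaller than a constant $C_*=C_*(R,R_\mu)$ depending only on $b,\kappa,\delta,C$, then $Y_n\to 0$ as $n\to\infty$. Since $\zeta_n\uparrow \mathbf 1_{(a-r_0/2,a+r_0/2)^c}$, passing to the limit gives $\int_\R [f^2(T)+R(f+g)^2(T)]\mathbf 1_{(a-r_0/2,a+r_0/2)}\,dx=0$, i.e. $f(T)=g(T)=0$ a.e. on $(a-r_0/2,a+r_0/2)$, which is the first assertion, valid for all $T\le C_* r_0^{5/2}/\E^{1/2}(f_0,g_0)$. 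For the second assertion, suppose $\mathrm{supp}(f_0+g_0)\subset[-b_0,b_0]$. Given $T>0$, pick a point $a$ with $|a|>b_0$ and apply the first part with the largest admissible radius, namely $r_0$ chosen so that $T=C_* r_0^{5/2}/\E^{1/2}(f_0,g_0)$, i.e. $r_0=(\E^{1/2}(f_0,g_0)T/C_*)^{2/5}$; for this $a,r_0$ we need $(a-r_0,a+r_0)\cap[-b_0,b_0]=\emptyset$, i.e. $a>b_0+r_0$. Hence $f(T)+g(T)$ vanishes on $(a-r_0/2,a+r_0/2)$ for every such $a$; letting $a\downarrow b_0+r_0$ shows $f(T)+g(T)$ vanishes on $(b_0+3r_0/2,\infty)$, and symmetrically on $(-\infty,-b_0-3r_0/2)$. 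This gives $\mathrm{supp}(f(T)+g(T))\subset[-b_0-C^*T^{1/3},b_0+C^*T^{1/3}]$ once one checks that $r_0\le C^*T^{1/3}$ for $T\ge 1$ and handles small $T$ separately — but in fact, since $r_0\sim T^{2/5}$ here and one wants the weaker $T^{1/3}$ bound to hold for all $T>0$, one iterates the first part over successive time steps of size comparable to the current $r^{5/2}/\E^{1/2}$: advancing by $N$ such steps enlarges the support by $\sum r_k$, and optimizing the step sizes (all equal, $T/N$) yields an enlargement $\lesssim N\cdot(\E^{1/2}T/(C_*N))^{2/5}= C\,\E^{1/5}T^{2/5}N^{3/5}$, which is not yet $T^{1/3}$; the correct bookkeeping, keeping track that $\E$ is conserved in time and that at each step the relevant energy is the \emph{localized} one (which stays bounded by $\E(f_0,g_0)$), shows that one should instead choose a single optimal $r_0$ per endpoint, giving the clean $T^{1/3}$ growth with $C^*$ depending on $R,R_\mu,\E(f_0,g_0)$ and hence on $f_0,g_0$.

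The main obstacle I anticipate is the interpolation bookkeeping in the second step: one must distribute the cubic nonlinearity so that exactly the right powers of $r_0$, $T$, and the energy appear — reproducing the homogeneity $T\sim r_0^{5/2}/\E^{1/2}$ that matches the scale invariance of \eqref{eq:S2} — while simultaneously keeping the $2^n$-weighted cutoff errors summable and the absorption into $Y_n$ legitimate (in particular the factor multiplying $Y_{n-1}$ must tend to a power $>1$). A secondary technical point is that \eqref{WL2} is only assumed for $\zeta\in W^1_4(\R)$, so the cutoffs $\zeta_n$ must be taken genuinely in that class (not merely Lipschitz), and the Gagliardo–Nirenberg step must be applied to $u\zeta_n^{1/2}\in H^1(\R)$, which requires a brief verification that $\zeta_n^{1/2}$ and its relevant products remain admissible test weights.
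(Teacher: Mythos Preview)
Your De Giorgi iteration is a plausible alternative to the paper's continuous Stampacchia--Bernis approach (in which one tracks $r\mapsto \int_0^T\!\int_{-r}^r|\p_xw|^2(r-|x|)_+\,dx\,dt$ and derives a differential inequality in $r$), but two genuine gaps remain.

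\textbf{The interpolation step does not close as written.} You propose Gagliardo--Nirenberg on $u=f+g$, which requires control of $\int|\p_xu|^2\zeta_n$. The localized dissipation in \eqref{WL2}, viewed as a quadratic form in $(\p_xf,\p_xg)$, has determinant proportional to $fg$ and is therefore degenerate wherever $f$ or $g$ vanishes; it does not dominate $|\p_x(f+g)|^2$. The paper's key device is the scalar function $w:=[f^2+R(f+g)^2]^{3/4}$: one checks pointwise that $|\p_xw|^2\le C\cdot(\text{dissipation density})$, while $w^{4/3}$ is the energy density and $w^2$ dominates the cubic term, so the local estimate collapses to
\[
\int w^{4/3}(T)\zeta^2 + C_1\int_0^T\!\!\int|\p_xw|^2\zeta^2 \le \int w^{4/3}(0)\zeta^2 + C_2\int_0^T\!\!\int w^2|\p_x\zeta|^2,
\]
after which one interpolates $\|w\|_{L_2}$ between $\|\p_xw\|_{L_2}$ and a weighted $\|w\|_{L_{4/3}}$. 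Without this (or an equivalent) reduction, your Step~2 cannot produce the scale-invariant factor $T\E^{1/2}/r_0^{5/2}$. (Incidentally, your cutoffs are oriented backwards: you want $\zeta_n\equiv1$ on the inner interval and $\zeta_n\equiv0$ outside $(a-r_n,a+r_n)$, so that the initial term drops.)

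\textbf{The $T^{1/3}$ rate needs an extra ingredient you have not used.} A single application of the first assertion, or any iteration in which $\E$ is treated as constant, only yields a $T^{2/5}$ expansion; your final paragraph acknowledges this and then asserts a fix by ``correct bookkeeping'', but none is given (and the claim that ``$\E$ is conserved'' is false). The upgrade to $T^{1/3}$ in the paper comes from the \emph{decay} of the energy,
\[
\E(f(t),g(t)) \le (1+t)^{-1/3}\Big[\E(f_0,g_0)+\tfrac{1}{6}\!\int_\R\!\big(f_0+\tfrac{R}{R_\mu}g_0\big)x^2\,dx\Big],
\]
imported from \cite{LM15} (this is why $C^*$ depends on the second moments of $f_0,g_0$). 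Feeding this into the single-step estimate gives $\beta(T_2)-\beta(T_1)\le C(T_2-T_1)^{2/5}T_1^{-1/15}$, and an iteration lemma of Bernis \cite[Lemma~7.6]{Be96} then converts the exponent $2/5$ into $2/5-1/15=1/3$.
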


We note that Theorem \ref{MT1} is only valid for weak solutions which satisfy  in addition the local energy estimate \eqref{WL2}.
Unfortunately, we are yet unable  to derive it for arbitrary weak solutions and it is in particular unclear whether
 it holds true for the weak solutions we constructed in \cite{LM13}. 
We shall show in Section \ref{Sec3} that for each initial data there is at least a weak solution to \eqref{eq:problem}
satisfying the local estimate \eqref{WL2}.
To this end we will adapt an approximation scheme from \cite{ELM11} which allows us to obtain a weak solution as a limit of classical solutions to a regularized version of the original system.

Let us also mention that Theorem \ref{MT1} gives no clue concerning the finite speed of propagation for each component taken
separately. 

\begin{rem}\label{R1}
\begin{itemize}
\item[$(a)$] It is shown in \cite{LM15} that the system \eqref{eq:S2} has  self-similar solutions  of the type
\[
\big[(t,x)\mapsto  (1+t)^{-1/3}(F,G)( (1+t)^{-1/3}x)\big],\qquad\text{for $t\geq0$ and $x\in\R$,}
\]
with compactly supported profiles $(F,G)\in H^1(\R,\R^2)\cap\cK^2.$ Hence, the estimate on the growth
rate of the support   obtained in Theorem \ref{MT1} matches that  of the self-similar solutions and is likely to be optimal.

\item[$(b)$] The constant $C^*$ in the last statement of Theorem \ref{MT1} only depends on  $f_0$ and $g_0$ through the energy $\E(f_0,g_0)$ and the second moments of
$f_0$ and $g_0$.
\end{itemize}
\end{rem}

Due to \cite{DPGG03}, a direct consequence of the local energy estimate \eqref{WL2} is 
the occurrence of waiting time phenomena.
\begin{thm}[Waiting time phenomena]\label{MT2}
 Let $(f,g)$ be a  weak  solution of \eqref{eq:problem} such that \eqref{WL2} holds for all $\zeta\in W^1_4(\R)$. Let  
$x_0\in\overline{\R\setminus{\rm supp\, } (f_0+g_0)}$ be such that 
$$
\limsup_{r\to 0}\frac{1}{r^5}\int_{x_0-r}^{x_0+r} \left[ f^2_0+R(f_0+g_0)^2 \right]\, dx<\infty.
$$
Then there exists a positive time $T_*$ such that $x_0\in\overline{\R\setminus{\rm supp\, } (f(T)+g(T))}$ for all $T\in (0,T_*)$.
\end{thm}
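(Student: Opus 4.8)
The plan is to deduce the statement from the local energy estimate \eqref{WL2} by the energy--iteration method for waiting--time phenomena of \cite{DPGG03}; the hypothesis on $x_0$ supplies precisely the quantitative flatness of the initial data that makes this method yield a waiting time independent of the scale. We may assume $x_0=0$, and we fix $r_1>0$ so small that the $\limsup$ hypothesis gives $\int_{-r}^{r}[f_0^2+R(f_0+g_0)^2]\,dx\le C_0 r^5$ for all $0<r\le 2r_1$. For $0<\rho<r\le r_1$ and $T>0$ introduce the local energy
\[
b(r,T):=\esup_{0<t<T}\int_{-r}^{r}[f^2+R(f+g)^2](t)\,dx,
\]
and apply \eqref{WL2} to a cut-off $\zeta\in W^1_4(\R)$ with $\zeta\equiv1$ on $(-\rho,\rho)$, $\mathrm{supp}\,\zeta\subset(-r,r)$ and $|\p_x\zeta|\le 2/(r-\rho)$. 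Discarding the nonnegative dissipation term on the left-hand side and using $0\le f\le f+g$, $0\le g\le f+g$, $(1+R)f+Rg\le(1+R)(f+g)$ to dominate the cubic integrand on the right by $C(R,R_\mu)(f+g)^3$, we arrive at
\[
b(\rho,T)\le\int_{-r}^{r}[f_0^2+R(f_0+g_0)^2]\,dx+\frac{C(R,R_\mu)}{(r-\rho)^2}\int_0^T\!\!\int_{-r}^{r}(f+g)^3\,dx\,dt.
\]

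The core of the proof is then an interpolation estimate bounding $\int_0^T\!\int_{-r}^{r}(f+g)^3$ by a \emph{superlinear} power of $b(r,T)$ with a prefactor that vanishes as $T\to0$. The available input is the dissipation recorded on the left of \eqref{WL2} --- controlling $\p_x((1+R)f+Rg)$ weighted by $f$ and $\p_x(f+g)$ weighted by $g$ --- together with the global bound $(f,g)\in L_2(0,\infty;H^1(\R;\R^2))$ from Definition~\ref{Def1}$(i)$; combined with a one-dimensional Gagliardo--Nirenberg inequality on $(-r,r)$, Hölder's inequality in time, and (following \cite{DPGG03}) power-type rather than naive cut-offs, this should yield, for suitable $\gamma>1$ and some $\Theta(r,T)$ with $\Theta(r,T)\to0$ as $T\to0^+$,
\[
\int_0^T\!\!\int_{-r}^{r}(f+g)^3\,dx\,dt\le C\,\Theta(r,T)\,b(r,T)^{\gamma},
\]
with exponents homogeneous under the $t^{1/3}$-scaling of the system. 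Inserting this into the previous display gives the self-improving inequality $b(\rho,T)\le\int_{-r}^{r}[f_0^2+R(f_0+g_0)^2]\,dx+C\,\Theta(r,T)(r-\rho)^{-2}b(r,T)^{\gamma}$ for all $0<\rho<r\le r_1$.

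It remains to run the iteration. Taking $\zeta\equiv1$ in \eqref{WL2} yields $\E(f(t),g(t))\le\E(f_0,g_0)$ for all $t$, which bounds $\esup_t\|(f,g)(t)\|_{L_2}$ and hence, together with the displays above at $r=r_1$ (whose initial-data term is $\le C_0(2r_1)^5$) and absolute continuity of the time integrals, gives $\limsup_{T\to0}b(r_1,T)\le C\,C_0\,r_1^5$. Since $\gamma>1$, choosing first $r_1$ and then $T_*>0$ small enough that $C_0r_1^5$, $b(r_1,T_*)$ and $\Theta(r_1,T_*)r_1^{-2}$ lie below the threshold of the iteration lemma of \cite{DPGG03}, that lemma applies along any geometric sequence of radii inside $(-r_1,r_1)$. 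Applying it across the gaps of $\mathrm{supp}\,(f_0+g_0)$ that cluster at $x_0$ --- on such a gap the initial-data term in the displays vanishes, so the iteration forces $b$ to vanish there, and the threshold is met \emph{uniformly} in the size of the gap precisely because the flatness factor $C_0r^5$ absorbs the scaling --- we obtain a single $T_*>0$ such that $f(T)+g(T)$ vanishes on each such gap for $0<T<T_*$. Hence $x_0\in\overline{\R\setminus\mathrm{supp}\,(f(T)+g(T))}$ for all $0<T<T_*$. (Alternatively, one localises the proof of Theorem~\ref{MT1}, replacing the global energy $\E(f_0,g_0)$ in its time threshold by the local energy of $(f_0,g_0)$ near $x_0$, which the hypothesis bounds by $C_0r^5$, and concludes in the same way.)

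The main obstacle is the interpolation step of the second paragraph. The dissipation in \eqref{WL2} is degenerate and strongly coupled: a short computation shows it controls the gradient of $(f,g)$ weighted essentially by $\min\{f,g\}$, which vanishes wherever either component does and is thus too weak to reproduce, as it stands, the argument for the porous medium equation (where the analogous dissipation directly controls $\|\p_x(u^{3/2})\|_{L_2}$). Extracting from \eqref{WL2} --- aided by the $H^1$-regularity of weak solutions and by a judicious choice of cut-off --- an interpolation inequality with an exponent $\gamma$ and a prefactor $\Theta$ that make the iteration close with a scale-independent threshold is the technical heart of the matter; the requirement that the flatness exponent be exactly $5$ is what this balance dictates, consistently with the $t^{1/3}$ self-similar rate appearing in Theorem~\ref{MT1} and Remark~\ref{R1}$(a)$.
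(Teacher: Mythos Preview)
Your proposal has a genuine gap precisely where you flag the ``main obstacle'': the interpolation step bounding $\int_0^T\!\int_{-r}^r(f+g)^3$ by a superlinear power of $b(r,T)$ is never carried out, and your own closing paragraph explains why working directly with the coupled, degenerate dissipation in \eqref{WL2} does not readily yield it. The difficulty is real: the dissipation controls $\p_x((1+R)f+Rg)$ only where $f>0$ and $\p_x(f+g)$ only where $g>0$, so neither $\|\p_x(f+g)^{3/2}\|_{L_2}$ nor any comparable scalar gradient is directly bounded, and the Gagliardo--Nirenberg step you need does not close as written.

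The missing idea, which the paper supplies in Lemma~\ref{LA2}, is to pass to the scalar auxiliary function
\[
w:=\big[f^2+R(f+g)^2\big]^{3/4}.
\]
A pointwise computation (see \eqref{haendel}) shows that the coupled dissipation in \eqref{WL2} dominates $|\p_x w|^2$, while the cubic error term on the right of \eqref{WL2} is dominated by $w^2$. This converts \eqref{WL2} into the clean scalar estimate \eqref{WLB},
\[
\int_\R w^{4/3}(T)\zeta^2\,dx+C_1\int_0^T\!\!\int_\R|\p_x w|^2\zeta^2\,dx\,dt
\le \int_\R w^{4/3}(0)\zeta^2\,dx+C_2\int_0^T\!\!\int_\R w^2|\p_x\zeta|^2\,dx\,dt,
\]
which is exactly of the form covered by \cite[Theorem~1.2]{DPGG03} with $k=1$, $p=2$, $q=4/3$. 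Since $w^{4/3}(0)=f_0^2+R(f_0+g_0)^2$, your flatness hypothesis is precisely the hypothesis of that theorem, and the waiting-time conclusion follows in one line. In other words, the ``judicious choice'' you allude to is not a cut-off but the change of unknown $w$; once it is made, no ad hoc interpolation is needed and the exponent $5$ falls out of the relation $2+p\,\frac{q+1}{p-q}=5$ for $(p,q)=(2,4/3)$.
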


Let us now describe the content of this paper:  Section \ref{Sec2} is devoted
to the proof of the main results. While  Theorem \ref{MT2} is a straightforward consequence   
of \eqref{WL2} and \cite[Theorem 1.2]{DPGG03}, the proof of  Theorem \ref{MT1} requires several steps and is inspired from 
\cite{Be96} which deals with the thin film equation. It is worth pointing out that fewer estimates are available for the system \eqref{eq:problem} as in \cite{Be96}.  The last section is devoted to the existence of weak solutions to \eqref{eq:problem} satisfying the local energy estimate \eqref{WL2}.


\section{Finite speed of propagation}\label{Sec2}

Throughout this section, $(f,g)$ is a  weak  solution of \eqref{eq:problem} which satisfies the local energy estimate 
\eqref{WL2} and 
\begin{equation}
w:=[f^2+R(f+g)^2]^{3/4}. \label{brahms}
\end{equation}
The function $w$ inherits some regularity properties of $(f,g)$ as shown in the following result.

\begin{lemma}\label{LA1}
Given non-negative functions $u,v\in H^1(\R)$, let 
\[
z:=(u^2+Rv^2)^{3/4}.
\]
Then $z\in H^1(\R)$ and 
\[\p_xz=\frac{3}{2}\frac{u\p_xu+Rv\p_xv}{{\bf 1}_{\{0\}}(z)+(u^2+Rv^2)^{1/4}},\]
where $ {\bf 1}_E$ is the characteristic function of the set $E.$
\end{lemma}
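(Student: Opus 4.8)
The plan is to view $z$ as the superposition $z=\Phi\circ h$ with $h:=u^2+Rv^2$ and $\Phi(s):=s^{3/4}$, the whole difficulty being that $\Phi$ is continuous on $[0,\infty)$ but fails to be Lipschitz near the origin, so that the chain rule for Sobolev functions cannot be invoked directly. First I would record the elementary preliminaries. Since $H^1(\R)$ embeds continuously into $L^\infty(\R)$, the functions $u$ and $v$ are bounded, whence $h\in H^1(\R)\cap L^\infty(\R)$ with $\p_x h=2(u\p_x u+Rv\p_x v)$, and, applying the Cauchy--Schwarz inequality to the vectors $(u,\sqrt R\, v)$ and $(\p_x u,\sqrt R\,\p_x v)$,
\[
|\p_x h|\le 2\,h^{1/2}\,k^{1/2},\qquad k:=(\p_x u)^2+R(\p_x v)^2\in L^1(\R).
\]
Writing $M:=\|h\|_{L^\infty(\R)}$, this gives $z^2=h^{3/2}\le M^{1/2}h\in L^1(\R)$, so $z\in L^2(\R)$; and the function $\psi$ given by the claimed right-hand side --- that is, $\psi=\frac{3}{4}h^{-1/4}\p_x h$ on $\{h>0\}$ and $\psi=0$ on $\{h=0\}=\{u=v=0\}$, noting that both $u\p_x u$ and $v\p_x v$ vanish there and that on $\{h>0\}$ one has $\frac{3}{4}h^{-1/4}\p_x h=\frac{3}{2}(u\p_x u+Rv\p_x v)(u^2+Rv^2)^{-1/4}$ --- satisfies $|\psi|\le\frac{3}{2}h^{1/4}k^{1/2}\le\frac{3}{2}M^{1/4}k^{1/2}\in L^2(\R)$. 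Thus $\psi\in L^2(\R)$ is the only candidate for $\p_x z$, and it remains to establish the identity $\p_x z=\psi$ in the sense of distributions.

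For that I would use a regularization. For $\e\in(0,1]$ set $z_\e:=(h+\e)^{3/4}-\e^{3/4}$. The map $s\mapsto(s+\e)^{3/4}-\e^{3/4}$ is $C^1$ on $[0,\infty)$, vanishes at $0$ and has derivative bounded by $\frac{3}{4}\e^{-1/4}$, so the classical chain rule gives $z_\e\in H^1(\R)$ with $\p_x z_\e=\frac{3}{4}(h+\e)^{-1/4}\p_x h$; moreover, by subadditivity of $s\mapsto s^{3/4}$ on $[0,\infty)$ one has $0\le z_\e\le z$. Then I would let $\e\to0$. Pointwise $z_\e\to z$, dominated by $z\in L^2(\R)$; and pointwise $\p_x z_\e\to\psi$ --- on $\{h>0\}$ by continuity of $s\mapsto s^{-1/4}$ there, and on $\{h=0\}$ because $\p_x h=2u\p_x u+2Rv\p_x v$ vanishes a.e. there --- while, using the Cauchy--Schwarz bound and $(h+\e)^{1/4}\ge h^{1/4}$, $|\p_x z_\e|\le\frac{3}{4}(h+\e)^{-1/4}\,2h^{1/2}k^{1/2}\le\frac{3}{2}M^{1/4}k^{1/2}\in L^2(\R)$ uniformly in $\e$. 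By dominated convergence, $z_\e\to z$ and $\p_x z_\e\to\psi$ in $L^2(\R)$, so passing to the limit in $\int_\R z_\e\,\p_x\xi\,dx=-\int_\R\p_x z_\e\,\xi\,dx$, valid for every $\xi\in C_0^\infty(\R)$, yields $\int_\R z\,\p_x\xi\,dx=-\int_\R\psi\,\xi\,dx$. Since $z,\psi\in L^2(\R)$ this shows $z\in H^1(\R)$ with $\p_x z=\psi$, which is the asserted formula.

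The only genuinely delicate point is the interplay between the degeneracy set $\{h=0\}$ and the non-Lipschitz behaviour of $s\mapsto s^{3/4}$ at the origin: this is exactly what forces the regularization, and the uniform $L^2$-bound on $\p_x z_\e$ needed to pass to the limit rests entirely on the Cauchy--Schwarz estimate $|\p_x h|\le 2h^{1/2}k^{1/2}$, which precisely compensates the $h^{-1/4}$ singularity. The facts I would take for granted are the Sobolev embedding $H^1(\R)\hookrightarrow L^\infty(\R)$, the product rule $\p_x(u^2)=2u\p_x u$ for $u\in H^1(\R)$, and the chain rule for the composition of an $H^1$ function with a $C^1$ function having bounded derivative.
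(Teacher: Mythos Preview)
Your proof is correct and complete. It differs from the paper's argument in the choice of regularization: the paper approximates the \emph{inner} functions $(u,v)$ by smooth strictly positive $(u_n,v_n)\in C^\infty(\R)\cap H^1(\R)$, forms $z_n=(u_n^2+Rv_n^2)^{3/4}$, and then shows $z_n\to z$ in $L_{4/3}(\R)$ together with an $H^1$-bound on $(z_n)_n$, concluding via weak compactness that $z\in H^1(\R)$; the derivative formula is then left to ``standard arguments''. You instead regularize the \emph{outer} map, replacing $s\mapsto s^{3/4}$ by the Lipschitz function $s\mapsto(s+\e)^{3/4}-\e^{3/4}$, apply the chain rule directly, and pass to the limit by dominated convergence, which yields strong $L^2$-convergence of both $z_\e$ and $\p_x z_\e$ and makes the derivative formula immediate. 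Both routes ultimately hinge on the same Cauchy--Schwarz estimate $|\p_x h|\le 2h^{1/2}k^{1/2}$ to control the singularity of $h^{-1/4}$; your version has the advantage of making this mechanism explicit and avoiding the weak-compactness step, at the modest cost of needing the elementary subadditivity bound $z_\e\le z$ for the domination.
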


\begin{proof} We choose positive functions 
$u_n,v_n\in C^\infty(\R)\cap H^1(\R), n\geq1$, such that $u_n\to u$ and $v_n\to v$ in $H^1(\R)$ and set
 \[
z_n:=(u_n^2+Rv^2_n)^{3/4}.
\]
Obviously $z_n^{4/3}\to z^{4/3}$ in $L_1(\R)$ and it follows from the H\"older continuity of the function $[x\mapsto |x|^{3/4}]$ that 
\begin{align*}
|z_n-z|^{4/3}\leq &|z_n^{4/3}-z^{4/3}|\qquad\text{for all $n\in\N$,}
\end{align*}
hence $z_n\to z$ in $L_{4/3}(\R).$ We next note that the sequence $(z_n)_n$ is bounded in $ H^1(\R) $ so that it has a subsequence which converges weakly in $H^1(\R)$ towards a limit  which coincides with $z$ almost everywhere. Consequently $z$ belongs to $H^1(\R)$ and the formula for $\p_x z$ follows by standard arguments.
\end{proof}

We now derive from  \eqref{WL2}  a local energy estimate for the function $w$ defined in \eqref{brahms} which is at the heart
of our analysis. 
\begin{lemma}\label{LA2} 
The function $w$ defined in \eqref{brahms} satisfies 
\begin{align} \label{WLB}
 \int_\R w^{4/3}(T)\zeta^2\, dx+C_1\int_0^T\int_\R |\p_x w|^2\zeta^2\, dx\, dt
 \leq &  \int_\R w^{4/3}(0)\zeta^2\, dx+C_2\int_0^T\int_{\R} w^2|\p_x\zeta|^2 \, dx\, dt
\end{align}
for all $T>0$ and all $\zeta\in W^1_4(\R)$.
The constants $C_1$ and $C_2$ depend only on $R$ and $R_\mu$.
\end{lemma}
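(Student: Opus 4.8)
The plan is to start from the local energy estimate \eqref{WL2} and convert the left-hand gradient term into $|\p_x w|^2\zeta^2$ and the right-hand term into $w^2|\p_x\zeta|^2$, with the constants absorbed into $C_1$ and $C_2$. First I would deal with the zeroth-order terms, which are immediate: by definition $w^{4/3}=f^2+R(f+g)^2$, so both $\int_\R w^{4/3}(T)\zeta^2\,dx$ and $\int_\R w^{4/3}(0)\zeta^2\,dx$ are exactly the zeroth-order terms appearing in \eqref{WL2}. Similarly, on the right-hand side one has the pointwise bound
\[
f\big((1+R)f+Rg\big)^2 + RR_\mu\, g(f+g)^2 \leq C(R,R_\mu)\,\big(f^2+R(f+g)^2\big)^{3/2} = C(R,R_\mu)\, w^2,
\]
which follows since $f,g\geq 0$ implies $f\leq (f^2+R(f+g)^2)^{1/2}$, $g\leq R^{-1/2}(f^2+R(f+g)^2)^{1/2}$, and $(1+R)f+Rg$, $f+g$ are likewise controlled by $(f^2+R(f+g)^2)^{1/2}$ up to constants depending on $R$. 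This handles the term with $|\p_x\zeta|^2$ and fixes $C_2$.

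The heart of the matter is the dissipation term: I must show that
\[
\int_0^T\!\!\int_\R |\p_x w|^2\zeta^2\,dx\,dt \leq C(R,R_\mu)\int_0^T\!\!\int_\R\big(f|(1+R)\p_x f+R\p_x g|^2 + RR_\mu\, g|\p_x f+\p_x g|^2\big)\zeta^2\,dx\,dt.
\]
By Lemma \ref{LA1} applied pointwise in $t$ to $u=f(t)$, $v=g(t)$ (both in $H^1(\R)$ by Definition \ref{Def1}$(i)$ for a.e. $t$), we have $w(t)\in H^1(\R)$ with
\[
\p_x w = \frac{3}{2}\,\frac{f\p_x f + R(f+g)\p_x(f+g)}{{\bf 1}_{\{0\}}(w)+(f^2+R(f+g)^2)^{1/4}}.
\]
On the set where $w>0$, I would write the numerator $N:=f\p_x f + R(f+g)\p_x(f+g)$ as a linear combination of the two fluxes $\sqrt f\,\big((1+R)\p_x f+R\p_x g\big)$ and $\sqrt g\,\big(\p_x f+\p_x g\big)$: indeed $N = \sqrt f\cdot\sqrt f\,\p_x f + R(f+g)\,(\p_x f+\p_x g)$, and one checks that $f\p_x f = \sqrt f\cdot\sqrt f\,(\p_x f+\p_x g) - f\p_x g$, so after grouping one gets an identity of the form $N = \alpha\,\sqrt f\,\big((1+R)\p_x f+R\p_x g\big)\cdot\sqrt f + \beta\,\sqrt g\,\big(\p_x f+\p_x g\big)\cdot\sqrt{g}\cdot(\text{coefficients})$; more directly, since $f\le (f^2+R(f+g)^2)^{1/2}$ and $f+g\le C(R)(f^2+R(f+g)^2)^{1/2}$, Cauchy--Schwarz gives
\[
|\p_x w|^2 \leq C\,\frac{f^2|\p_x f|^2 + R^2(f+g)^2|\p_x(f+g)|^2}{(f^2+R(f+g)^2)^{1/2}} \leq C\big(f|\p_x f|^2 + (f+g)|\p_x(f+g)|^2\big).
\]
It then remains to bound $f|\p_x f|^2$ and $(f+g)|\p_x(f+g)|^2$ by a constant times $f|(1+R)\p_x f+R\p_x g|^2 + g|\p_x f+\p_x g|^2$; the crucial linear-algebra fact is that the $2\times2$ matrix sending $(\p_x f,\p_x g)$ to $\big((1+R)\p_x f+R\p_x g,\ \p_x f+\p_x g\big)$ is invertible (its determinant is $(1+R)-R=1$), so $\p_x f$ and $\p_x f+\p_x g$ are bounded by a constant (depending on $R$) times $|(1+R)\p_x f+R\p_x g|+|\p_x f+\p_x g|$, whence
\[
f|\p_x f|^2 \leq C(R)\,f\big(|(1+R)\p_x f+R\p_x g|^2 + |\p_x f+\p_x g|^2\big),
\]
and likewise for $(f+g)|\p_x(f+g)|^2\le C(R)(f+g)(\cdots)$. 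Since $f\le f+g$ and $g\le f+g$, and since $f,f+g$ may both fail to be dominated by $g$ pointwise, I would instead keep $f(\cdots)$ paired with the $f$-flux and bound the $g$-contribution using $f+g$; the clean way is to note $f|\p_x f|^2+(f+g)|\p_x(f+g)|^2 \le C(R)\big[(f+g)|(1+R)\p_x f+R\p_x g|^2+(f+g)|\p_x f+\p_x g|^2\big]$ and then absorb: but this fails because $f+g$ is not controlled by $f$.

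The main obstacle is precisely this last point: the dissipation in \eqref{WL2} is weighted by $f$ in the first flux and by $g$ in the second, and one cannot in general bound $g|\p_x f|^2$ (which enters $|\p_x w|^2$ via the $(f+g)|\p_x(f+g)|^2$ term) by $f$ times something. The resolution, which I expect to be the technical core, is to avoid splitting $w$ into its $f$- and $g$-parts and instead observe that $\p_x w$ is built from $f\p_x f+R(f+g)\p_x(f+g)$ divided by $(f^2+R(f+g)^2)^{1/4}$; writing $F:=(1+R)\p_x f+R\p_x g$ and $H:=\p_x f+\p_x g$, one has $\p_x f = F-RH$ and $\p_x(f+g)=F-RH+H=F-(R-1)H$, so the numerator becomes $f(F-RH)+R(f+g)(F-(R-1)H)$, and one regroups it as $\big(f+R(f+g)\big)F - \big(Rf+R(R-1)(f+g)\big)H$. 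Then
\[
|\p_x w| \le \tfrac32\,\frac{(f+R(f+g))|F| + (Rf+R(R-1)(f+g))|H|}{(f^2+R(f+g)^2)^{1/4}},
\]
and since $f+R(f+g)\le C(R)(f^2+R(f+g)^2)^{1/2}$ with the analogous bound for the $H$-coefficient, we get $|\p_x w|\le C(R)\big[(f^2+R(f+g)^2)^{1/4}|F| + (f^2+R(f+g)^2)^{1/4}|H|\big]$. This still has the wrong weights; the correct bound uses that $(f^2+R(f+g)^2)^{1/4}$ times the coefficient of $F$ is at most $C(R)\,f^{1/2}\cdot(f^2+R(f+g)^2)^{1/4}\cdot$(bounded) only when $f$ is comparable to $(f^2+R(f+g)^2)^{1/2}$, i.e. essentially when $g\lesssim f$. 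I would therefore split the $(t,x)$-domain into the region $\{f\ge g\}$ and $\{g> f\}$: on $\{f\ge g\}$ one has $f^2+R(f+g)^2\le C(R)f^2$, so $|\p_x w|^2\le C(R)f(|F|^2+|H|^2)\le C(R,R_\mu)(f|F|^2+RR_\mu g|H|^2 + f|H|^2)$ and $f|H|^2\le C(R)f(|F|^2+\cdots)$ again by invertibility, giving the claim; on $\{g>f\}$ one has $f^2+R(f+g)^2$ comparable to $g^2$, so $|\p_x w|^2\le C(R)g(|F|^2+|H|^2)$, and now $g|F|^2$ must be controlled by $g|H|^2+f|F|^2$ — but on $\{g>f\}$, $F = (1+R)\p_x f + R\p_x g$ and $H=\p_x f+\p_x g$, and $\p_x f, \p_x g$ are each $\le C(R)(|F|+|H|)$, so $g|F|^2 \le C(R) g(|H|^2 + |\p_x f|^2)$; since on this region $g|\p_x f|^2$ is still not obviously controlled, the honest estimate is $g|F|^2+g|H|^2\le C(R)\,g\,(|\p_x f|^2+|\p_x g|^2)\le C(R)\,g\,(|\p_x f+\p_x g|^2 + |\p_x f|^2)$, and here I would use that $f\le g$ is \emph{not} enough — so in fact one needs the full strength of $(1+R)\p_x f+R\p_x g$: note $(1+R)|\p_x f|\le |F|+R|\p_x g|$ and this circularity is broken only by solving the linear system, which gives $|\p_x f|\le |F|+R|H|$ and $|\p_x g|\le |F|+(1+R)|H|$ up to the factor $1/\det=1$, hence $g|\p_x f|^2+g|\p_x g|^2\le C(R)g(|F|^2+|H|^2)$ — which is circular. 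The way out is that we do \emph{not} need to bound $g|F|^2$ by the dissipation: rather $|\p_x w|^2\le C(R)[f|F|^2+g|H|^2]$ should be proven \emph{directly} by writing $|\p_x w|^2 = \tfrac94\,N^2/(f^2+R(f+g)^2)^{1/2}$ and bounding $N^2 = (f\p_x f+R(f+g)\p_x(f+g))^2$; expanding $N$ in terms of $F,H$ as above and using Cauchy--Schwarz with the weights chosen so that $f$ pairs with $F$ and $g$ (not $f+g$) pairs with $H$ — achievable because $f\p_x f = f^{1/2}\cdot f^{1/2}\p_x f$ and one can always write $f\p_x f = f^{1/2}\cdot f^{1/2}F - Rf^{1/2}\cdot f^{1/2}H$ with $f^{1/2}$ bounded by $(f^2+R(f+g)^2)^{1/4}$, while $R(f+g)\p_x(f+g)$, with $f+g\le C(R)(f^2+R(f+g)^2)^{1/2}$ but also $\p_x(f+g)=H$, gives $R(f+g)H$ and $(f+g)\le C(R)(f^2+R(f+g)^2)^{1/4}\cdot(f^2+R(f+g)^2)^{1/4}$ where only \emph{one} factor of $(f^2+R(f+g)^2)^{1/4}$ is cancelled, leaving $(f+g)|H|/(f^2+R(f+g)^2)^{1/4}\le C(R)(f^2+R(f+g)^2)^{1/4}|H|$; squaring, $(f^2+R(f+g)^2)^{1/2}|H|^2$, and here $(f^2+R(f+g)^2)^{1/2}\le C(R)(f+g)$, still not $g$. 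I therefore expect the genuinely delicate step to be establishing the pointwise inequality $N^2 \le C(R,R_\mu)\,(f^2+R(f+g)^2)^{1/2}\,\big(f|F|^2+RR_\mu g|H|^2\big)$, which I would prove by direct algebraic manipulation treating $(\sqrt f,\sqrt g)$ and $(\p_x f,\p_x g)$ as the basic quantities: expand $N = f\p_x f + R(f+g)(\p_x f+\p_x g)$, substitute $\p_x f = (H\cdot\text{(lin.comb.)})$... and finally bound using Young's inequality after verifying the quadratic form $f|F|^2+g|H|^2$ in $(\p_x f,\p_x g)$ is positive definite whenever $f>0$ or $g>0$ with the degeneracies matching those of $N/(f^2+R(f+g)^2)^{1/4}$. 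Once this pointwise inequality is in hand, integrating over $\R\times(0,T)$ against $\zeta^2$ and combining with the zeroth-order terms and the right-hand side bound established earlier yields \eqref{WLB} with $C_1,C_2$ depending only on $R,R_\mu$, completing the proof.
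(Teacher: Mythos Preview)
Your handling of the zeroth-order terms and the right-hand side is correct and matches the paper. The genuine gap is in the dissipation term: you correctly identify that the whole lemma reduces to the pointwise inequality
\[
N^2 \le C(R,R_\mu)\,(f^2+R(f+g)^2)^{1/2}\,\big(f|F|^2+RR_\mu\, g|H|^2\big),
\]
where $N=f\p_x f+R(f+g)\p_x(f+g)$, $F=(1+R)\p_x f+R\p_x g$, $H=\p_x f+\p_x g$, but you never prove it. Your long chain of attempts misses the one-line algebraic identity that makes this trivial:
\[
N = fF + Rg\,H.
\]
Indeed, since $\p_x f = F-RH$ and $\p_x(f+g)=H$ (not $F-(R-1)H$ as you wrote; $H$ \emph{is} $\p_x f+\p_x g$ by definition, so your substitution there was in error), one has
\[
N = f(F-RH) + R(f+g)H = fF + RgH.
\]
With this identity in hand, Cauchy--Schwarz (or simply $(a+b)^2\le 2a^2+2b^2$) gives
\[
N^2 \le 2f^2|F|^2 + 2R^2 g^2|H|^2,
\]
and dividing by $(f^2+R(f+g)^2)^{1/2}$ together with the elementary bounds $f\le (f^2+R(f+g)^2)^{1/2}$ and $g\le R^{-1/2}(f^2+R(f+g)^2)^{1/2}$ yields
\[
|\p_x w|^2 \le C\max\{1,\sqrt{R}/R_\mu\}\big(f|F|^2+RR_\mu\, g|H|^2\big),
\]
which is exactly what you need. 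All of your region-splitting, linear-algebra inversions, and circular estimates are unnecessary; the proof is a two-line computation once the numerator is written in the form $fF+RgH$ rather than in terms of $\p_x f$ and $\p_x(f+g)$.
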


\begin{proof} By Lemma~\ref{LA1} the function $w$ belongs to $ H^1(\R)$ and
\begin{align}
|\p_xw|^2 & = \left|\frac{f[(1+R)\p_xf+R\p_xg]+R g (\p_xf+\p_xg)}{{\bf 1}_{\{0\}}(w)+(f^2+R(f+g)^2)^{1/4}}\right|^2 \nonumber \\
& \le \frac{2f}{{\bf 1}_{\{0\}}(w)+(f^2+R(f+g)^2)^{1/2}}  f| (1+R)\p_xf+R\p_xg |^2 \nonumber \\
& \qquad +\frac{2R^2g}{{\bf 1}_{\{0\}}(w)+(f^2+R(f+g)^2)^{1/2}}  g|  \p_xf+ \p_xg |^2 \nonumber \\
& \le 2\max\left\{1,\frac{\sqrt{R}}{R_\mu}\right\} \left[ f| (1+R)\p_xf+R\p_xg |^2+RR_\mu  g|  \p_xf+ \p_xg |^2\right]. \label{haendel}
\end{align}
In addition, since $w^{4/3} \ge \max{\left\{ (1+R) f^2 , R g^2 \right\}}$, there holds
\begin{align*}
f \left( (1+R)f+Rg \right)^2+RR_\mu  g \left( f + g \right)^2\leq &2\max\left\{\frac{R_\mu}{\sqrt R},
 \sqrt{1+R}\right\}w^2.
\end{align*}
Combining these two inequalities with \eqref{WL2} gives the claim.
\end{proof}

We next recall that the Gagliardo-Nirenberg inequality  \cite[Theorem 1]{N66} states that there exists a constant $C>0$ such that
\[
\|v\|_2\leq C\|\p_x v\|_2^{1/5}\|v\|_{4/3}^{4/5}+C\|v\|_{4/3}\qquad\text{for all $v\in H^1((-1,1))$.}
\]  
Using a scaling argument, we deduce from the inequality above that, for $r>0$,
\begin{equation}\label{GN}
\|v\|_2\leq C\|\p_x v\|_2^{1/5}\|v\|_{4/3}^{4/5}+Cr^{-1/4}\|v\|_{4/3} \qquad\text{for all $v\in H^1((-r,r))$.}
\end{equation}
A consequence of the Gagliardo-Nirenberg inequality \eqref{GN} is the following interpolation inequality in the spirit 
of \cite[Lemma~10.1]{Be96}.

\begin{lemma}\label{LA3} 
There is $C_3>0$ such that, given $r>0$ and $v\in H^1((-r,r))$, there holds
\begin{equation}\label{FGN}
\|v\|_2^2\leq C_3\|\p_xv\|_2^{14/11} I_r^{6/11}+C_3r^{-7/2}I_r^{3/2},  
\end{equation}
where
\[
I_r:=\int_{-r}^r (r-|x|)_+^2 |v(x)|^{4/3}\, dx.
\]
\end{lemma}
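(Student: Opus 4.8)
The plan is to derive \eqref{FGN} directly from the Gagliardo--Nirenberg inequality \eqref{GN} by introducing the weight $(r-|x|)_+$ and optimizing over a parameter. First I would pick $\rho\in(0,r]$ to be chosen later and apply \eqref{GN} on the interval $(-\rho,\rho)$ to the function $v$, getting
\[
\|v\|_{L_2(-\rho,\rho)}\le C\|\p_x v\|_{L_2(-\rho,\rho)}^{1/5}\|v\|_{L_{4/3}(-\rho,\rho)}^{4/5}+C\rho^{-1/4}\|v\|_{L_{4/3}(-\rho,\rho)}.
\]
The point of the weight is that on $(-\rho,\rho)$ one has $(r-|x|)_+\ge r-\rho$, so $\|v\|_{L_{4/3}(-\rho,\rho)}^{4/3}\le (r-\rho)^{-2} I_r$; hence $\|v\|_{L_{4/3}(-\rho,\rho)}\le (r-\rho)^{-3/2} I_r^{3/4}$. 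Substituting and also bounding $\|\p_x v\|_{L_2(-\rho,\rho)}\le\|\p_x v\|_2$ yields, after squaring,
\[
\|v\|_{L_2(-\rho,\rho)}^2\le C\|\p_x v\|_2^{2/5}(r-\rho)^{-2}I_r^{6/5}+C\rho^{-1/2}(r-\rho)^{-3}I_r^{3/2}.
\]

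Next I would control the remaining piece $\int_{\rho<|x|<r}|v|^2\,dx$. This is not covered by the previous step, so I would instead run the above argument not on one interval but use a slightly different device: apply \eqref{GN} on $(-r,r)$ itself but to the \emph{weighted} function, or — more simply — choose $\rho=r/2$ from the start and bound the contribution from $\{r/2<|x|<r\}$ using the weight there as well. Concretely, on $\{r/2\le|x|\le r\}$ we do not have a lower bound on the weight, so the cleanest route is to apply \eqref{GN} on all of $(-r,r)$ after multiplying $v$ by the Lipschitz cutoff $\chi(x):=\min\{1,2(r-|x|)_+/r\}$, which equals $1$ on $(-r/2,r/2)$, is supported in $(-r,r)$, and satisfies $|\chi'|\le 2/r$ and $\chi\le 2(r-|x|)_+/r$. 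Then $\|\chi v\|_{L_{4/3}}^{4/3}\le (2/r)^{4/3}\int (r-|x|)_+^{4/3}|v|^{4/3}\,dx\le (2/r)^{4/3}(2r)^{2/3}\cdot$(something)$\ \le C r^{-2/3}I_r^{?}$ — here one must be careful, since $I_r$ carries the square of the weight, not the $4/3$ power; using $(r-|x|)_+^{4/3}\le (2r)^{2/3}(r-|x|)_+^{2/3}$ is not quite what is wanted either. The correct bookkeeping is: $\|\chi v\|_{L_{4/3}}^{4/3}\le\int (2(r-|x|)_+/r)^{4/3}|v|^{4/3}\,dx$, and since $2(r-|x|)_+/r\le 1$ implies $(2(r-|x|)_+/r)^{4/3}\le (2(r-|x|)_+/r)^{2}=4(r-|x|)_+^2/r^2$, we get $\|\chi v\|_{L_{4/3}}^{4/3}\le 4 r^{-2}I_r$, i.e. $\|\chi v\|_{L_{4/3}}\le C r^{-3/2}I_r^{3/4}$. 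Likewise $\|\p_x(\chi v)\|_2\le\|\p_x v\|_2+ (2/r)\|v\mathbf 1_{\{|x|<r\}}\|_2$, and the last term is itself bounded via the weighted $L_{4/3}$ norm only at the price of reintroducing $\|v\|_2$; to avoid circularity I would instead keep $\|\p_x(\chi v)\|_2\le \|\p_x v\|_2 + (2/r)\|v\|_{L_2(-r,r)}$ and absorb.

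Putting it together: applying \eqref{GN} on $(-r,r)$ to $\chi v$ gives
\[
\|v\|_{L_2(-r/2,r/2)}\le\|\chi v\|_2\le C\|\p_x(\chi v)\|_2^{1/5}\|\chi v\|_{4/3}^{4/5}+Cr^{-1/4}\|\chi v\|_{4/3},
\]
and inserting $\|\chi v\|_{4/3}\le C r^{-3/2}I_r^{3/4}$ together with $\|\p_x(\chi v)\|_2\le\|\p_x v\|_2+Cr^{-1}\|v\|_{L_2(-r,r)}$ gives, after squaring and using Young's inequality $ab\le\delta a^{p}+C_\delta b^{p'}$ to split the mixed term,
\[
\|\chi v\|_2^2\le C\|\p_x v\|_2^{2/5}\big(r^{-3/2}I_r^{3/4}\big)^{8/5}+Cr^{-2/5}\|v\|_{L_2(-r,r)}^{2/5}\big(r^{-3/2}I_r^{3/4}\big)^{8/5}+Cr^{-1/2}\big(r^{-3/2}I_r^{3/4}\big)^2.
\]
The exponents $2/5$ on $\|\p_x v\|_2$ and $8/5\cdot 3/4=6/5$ on $I_r$ do not yet match the target $14/11$ and $6/11$; the resolution is that the middle term still contains $\|v\|_{L_2(-r,r)}^{2/5}$, which must be fed back in. I would therefore set $N:=\|v\|_{L_2(-r,r)}$, cover $(-r,r)$ by finitely many translates/dilates of the argument (or, more efficiently, run the scheme on the two halves $(-r,0)$ and $(0,r)$ with the one-sided weights and iterate once), obtaining an inequality of the schematic form $N^2\le A\, N^{2/5} + B$ with $A=C\,r^{-7/5}\|\p_x v\|_2^{?}I_r^{?}$ and $B$ the clean terms; solving $N^2\le A N^{2/5}+B$ via Young gives $N^2\le C A^{5/4}+CB$, and a short computation of the resulting exponents produces exactly $\|\p_x v\|_2^{14/11}I_r^{6/11}$ and $r^{-7/2}I_r^{3/2}$. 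This last exponent-chasing — making the feedback term close up to the precise powers $14/11$ and $6/11$ — is the main obstacle; everything else is the standard cutoff-plus-Young routine. An alternative, cleaner execution that sidesteps the feedback is to apply \eqref{GN} with the cutoff $\chi$ but keep the scaling parameter free: replace $r$ in \eqref{GN} by an auxiliary $\rho\le r$, bound $\|\chi v\|_{4/3}\le C\rho^{-1/2}r^{-1}I_r^{3/4}$ type estimates, and then optimize over $\rho$; choosing $\rho$ so as to balance the two resulting terms yields the powers $6/11$ and $14/11$ automatically (the $11$ in the denominator is the tell-tale sign of a one-parameter optimization combining the GN exponent $1/5$ with the weight exponent). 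I would adopt whichever of these bookkeeping routes is shortest in writing, citing \cite[Lemma~10.1]{Be96} for the analogous computation.
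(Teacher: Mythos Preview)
Your proposal has a genuine gap. The cutoff route controls only $\|\chi v\|_2\ge\|v\|_{L_2(-r/2,r/2)}$, while the right-hand side of your closing scheme $N^2\le A\,N^{2/5}+B$ requires $N=\|v\|_{L_2(-r,r)}$ on the \emph{left}. The missing contribution from $\{r/2<|x|<r\}$ cannot be recovered by ``covering $(-r,r)$ by translates/dilates'' or by ``iterating on the two halves'': the weight $(r-|x|)_+$ vanishes at $\pm r$, so no choice of cutoff supported inside $(-r,r)$ will give you a lower bound on $\|v\|_{L_2}$ near the boundary in terms of $I_r$ alone. Your alternative suggestion of optimizing over an auxiliary $\rho$ in the cutoff construction has the same defect; it only shrinks the region you control. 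And the exponent-chasing you defer (``a short computation produces exactly $14/11$ and $6/11$'') is not carried out; with the terms you actually display it does not close without a further interpolation step you have not identified.

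The paper's proof avoids the cutoff entirely. It applies \eqref{GN} on the full interval $(-r,r)$ to $v$ itself, and instead bounds $\|v\|_{L_{4/3}(-r,r)}$ by splitting the integral at a free parameter $\rho\in(0,r)$: on $(-\rho,\rho)$ use $(r-|x|)_+\ge r-\rho$ to get $(r-\rho)^{-2}I_r$, and on $\{\rho<|x|<r\}$ use H\"older against $\|v\|_2$ to get $2(r-\rho)^{1/3}\|v\|_2^{4/3}$. Equating the two contributions fixes $\rho$ and yields
\[
\|v\|_{4/3}\le 4\,\|v\|_2^{6/7}\,I_r^{3/28}.
\]
Substituting this into \eqref{GN} gives an inequality with $\|v\|_2^2$ on both sides, and two successive applications of Young's inequality close it to \eqref{FGN}. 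The key idea you were missing is that the boundary region is controlled not through $I_r$ but through $\|v\|_2$ itself via H\"older --- precisely the feedback term, introduced deliberately at the level of the $L_{4/3}$ norm rather than appearing as an unwanted byproduct of differentiating a cutoff.
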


\begin{proof} We pick $\rho\in(0,r)$ arbitrary and infer from the H\"older inequality that
\begin{align*}
\int_{-r}^r|v|^{4/3}\, dx & = \int_{-\rho}^\rho|v|^{4/3}\, dx+\int_{\{\rho< |x| < r\}} |v|^{4/3}\, dx \\
& \leq\frac{1}{(r-\rho)^2}\int_{-\rho}^\rho (r-|x|)_+^2 |v|^{4/3}\, dx +2(r-\rho)^{1/3}\|v\|_2^{4/3}.
\end{align*}
We now choose $\rho\in(0,r)$ such that 
\[\frac{1}{(r-\rho)^2}\int_{-\rho}^\rho (r-|x|)_+^2 |v|^{4/3}\, dx=2(r-\rho)^{1/3}\|v\|_2^{4/3}\]
and we obtain
\begin{align}\label{*}
\|v\|_{4/3}\leq&4\|v\|_2^{6/7} I_r^{3/28}.
\end{align}
Using \eqref{GN} and \eqref{*} yields
\begin{align*} 
\|v\|_2^2\leq C\|\partial_x v\|_2^{2/5}\big(\|v\|_2^2\big)^{24/35} I_r^{6/35}+Cr^{-1/2}\big(\|v\|_2^2\big)^{6/7} I_r^{3/14}, 
\end{align*}
and thus
\begin{align*} 
\big(\|v\|_2^2\big)^{11/35}\leq C\|\partial_x v\|_2^{2/5}  I_r^{6/35}+Cr^{-1/2}\big(\|v\|_2^2\big)^{6/35} I_r^{3/14}. 
\end{align*}
By Young's inequality we get
\begin{align*} 
\|v\|_2^2\leq& C\|\p_x v\|_2^{14/11}  I_r^{6/11}+Cr^{-35/22}\big(\|v\|_2^2\big)^{6/11} I_r^{15/22}\\
\leq& C\|\p_x v\|_2^{14/11}  I_r^{6/11}+\frac{1}{2}\|v\|_2^2+Cr^{-7/2}  I_r^{3/2},
\end{align*}
and the proof is complete.
\end{proof}

We now introduce additional notation. For $r>0$ and $T>0$ we set
\begin{align*}
&u_k(r,T):=\int_0^T\int_{-r}^r|\p_x w(t,x)|^2 (r-|x|)_+^k\, dx\, dt\qquad \text{for $k\in\{0,1,2\}$},\\
&I(r,T):=\sup_{t\in (0,T)} \int_{-r}^r w^{4/3}(t,x)(r-|x|)_+^2\, dx,
\end{align*}
where $w$ is defined in \eqref{brahms}. We first derive from \eqref{WLB} an inequality relating $I(r,T)$, $u_0(r,T)$, and $u_2(r,T)$ under suitable constraints on $r$ and $T$.
 
\begin{lemma}\label{LA4}
Consider $r_0>0$ such that  ${\rm supp} (f_0+g_0)\cap (-r_0,r_0)=\emptyset.$ There are positive constants $C_4$ and $C_5$ such that, if $T_0>0$ is such that
\begin{align}\label{COND1}
C_4 T_0 \left( \frac{r_0}{2} \right)^{-7/2} I^{1/2}(r_0,T_0)\leq \frac{1}{2},
\end{align}
then
\begin{align}\label{**}
\frac{1}{3} I(r,T) + u_2(r,T)\leq C_5 T^{4/5} u_0^{7/5}(r,T) 
\end{align}
 for all $r_0/2\leq r\leq r_0$ and $ 0<T\leq T_0$. 
\end{lemma}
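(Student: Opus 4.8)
The plan is to insert into the local energy estimate \eqref{WLB} of Lemma~\ref{LA2} the Lipschitz test function $\zeta_r(x):=(r-|x|)_+$, which belongs to $W^1_4(\R)$ and satisfies $\zeta_r^2=(r-|x|)_+^2$ and $|\p_x\zeta_r|^2={\bf 1}_{(-r,r)}$. Since $f_0$ and $g_0$ are non-negative and $f_0+g_0$ vanishes a.e.\ on $(-r_0,r_0)\supseteq(-r,r)$, the function $w(0)=[f_0^2+R(f_0+g_0)^2]^{3/4}$ vanishes a.e.\ on $(-r,r)$ and the term $\int_\R w^{4/3}(0)\zeta_r^2\,dx$ in \eqref{WLB} drops out. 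Applying \eqref{WLB} with $T$ replaced by an arbitrary $\tau\in(0,T]$ and bounding $\int_0^\tau\le\int_0^T$ on the right-hand side, one gets $\int_{-r}^r w^{4/3}(\tau)(r-|x|)_+^2\,dx\le C_2 M$ and $C_1 u_2(r,\tau)\le C_2 M$, where $M:=\int_0^T\!\int_{-r}^r w^2\,dx\,dt$; taking the essential supremum over $\tau$ in the first inequality and $\tau=T$ in the second then yields
\[
I(r,T)+u_2(r,T)\le C_6\,M ,\qquad C_6=C_6(R,R_\mu).
\]

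Next I would estimate $M$ by means of the interpolation inequality \eqref{FGN}. For a.e.\ $t\in(0,T)$ one has $(f(t),g(t))\in H^1(\R;\R^2)$, so by Lemma~\ref{LA1} the function $w(t)$ belongs to $H^1(\R)$, hence to $H^1((-r,r))$, and \eqref{FGN} applied to $v=w(t)$ gives
\[
\int_{-r}^r w^2(t)\,dx\le C_3\,\|\p_x w(t)\|_{L_2((-r,r))}^{14/11}\,I_{r,t}^{6/11}+C_3\,r^{-7/2}\,I_{r,t}^{3/2},
\qquad I_{r,t}:=\int_{-r}^r (r-|x|)_+^2\,w^{4/3}(t)\,dx .
\]
Using $I_{r,t}\le I(r,T)$ for a.e.\ $t$, integrating in $t$, and applying H\"older's inequality in time with exponents $11/7$ and $11/4$ to the factor $\|\p_x w(t)\|_{L_2((-r,r))}^{14/11}$ (so that $\int_0^T\|\p_x w(t)\|_{L_2((-r,r))}^{14/11}\,dt\le T^{4/11}u_0(r,T)^{7/11}$), one arrives at
\[
M\le C_3\,I(r,T)^{6/11}u_0(r,T)^{7/11}T^{4/11}+C_3\,r^{-7/2}T\,I(r,T)^{3/2}.
\]

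Combining the two displays and setting $C_4:=C_3C_6$, the crucial observation is that $I(r,T)$ is non-decreasing in each of its arguments, since the weight $(r-|x|)_+^2$ is pointwise non-decreasing in $r$. Hence, for $r_0/2\le r\le r_0$ and $0<T\le T_0$, the prefactor of the last term satisfies $C_4\,r^{-7/2}T\,I(r,T)^{1/2}\le C_4\,(r_0/2)^{-7/2}T_0\,I(r_0,T_0)^{1/2}\le 1/2$ by hypothesis \eqref{COND1}, so this term is absorbed into the left-hand side and leaves
\[
\tfrac12\,I(r,T)+u_2(r,T)\le C_4\,I(r,T)^{6/11}\,u_0(r,T)^{7/11}T^{4/11} .
\]
Young's inequality with exponents $11/6$ and $11/5$, applied so as to produce the term $\tfrac16 I(r,T)$, namely $C_4\,I^{6/11}\bigl(u_0^{7/11}T^{4/11}\bigr)\le\tfrac16\,I+C_5\,u_0^{7/5}T^{4/5}$ with $C_5=C_5(R,R_\mu)$, then gives exactly \eqref{**} after rearranging (indeed $\tfrac12-\tfrac16=\tfrac13$).

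This is essentially a bookkeeping computation, so I do not expect a genuine obstacle. The two points requiring care are the vanishing of the initial term in \eqref{WLB} — which is precisely where the separation of ${\rm supp}(f_0+g_0)$ from $(-r_0,r_0)$ together with the non-negativity of $f_0,g_0$ is used — and the exploitation of the smallness assumption \eqref{COND1} via the monotonicity of $r\mapsto I(r,T)$ and $T\mapsto I(r,T)$, which is what permits absorbing the $r^{-7/2}$-term; once these are in place, tracking the exponents through the chain interpolation $\to$ H\"older in time $\to$ Young is routine.
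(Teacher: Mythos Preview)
Your proof is correct and follows essentially the same approach as the paper: the same test function $\zeta(x)=(r-|x|)_+$ in \eqref{WLB}, the same application of the interpolation inequality \eqref{FGN} followed by H\"older in time, the same use of the monotonicity of $I(r,T)$ to absorb the $r^{-7/2}$-term via \eqref{COND1}, and the same Young's inequality to finish. If anything, you are slightly more explicit than the paper in justifying the passage from the pointwise-in-$\tau$ estimate to the supremum defining $I(r,T)$.
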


\begin{proof} 
Let $T\in (0,T_0]$ and $r\in (0,r_0]$. Setting $\zeta(x) := (r-|x|)_+$, $x\in\R$, we observe that the assumptions on $f_0+g_0$ guarantees that $\zeta^2(x)  w^{4/3}(0,x) = 0$ for $x\in\R$ and we infer from \eqref{WLB} that
\begin{align*} 
 \int_\R w^{4/3}(t)\zeta^2\, dx+C_1\int_0^t\int_\R |\p_x w|^2\zeta^2\, dx\, ds
 \leq &  C_2\int_0^t\int_{-r}^r w^2 \, dx\, ds
\end{align*}
for all $t\in (0,T)$. Hence, there exists a constant $C>0$ such that 
\begin{align*}
I(r,T)+u_2(r,T)\leq C&\int_0^T \int_{-r}^r w^2\, dx ds.
\end{align*}
Using \eqref{FGN} and the H\"older inequality, it follows that 
\begin{align*}
I(r,T)+u_2(r,T)\leq &CTr^{-7/2}I^{3/2}(r,T)+CI^{6/11}(r,T)\int_0^T\|\p_x w(s)\|_{L_2((-r,r))}^{14/11}  \, ds \nonumber\\
 \leq &C_4 Tr^{-7/2}I^{3/2}(r,T)+CT^{4/11}I^{6/11}(r,T)u_0^{7/11}(r,T).
\end{align*}
Since $I(r,T)$ is a nondecreasing  function in both variables $r$ and $T$,  the property \eqref{COND1}, together with Young's inequality and the above inequality, leads us to
\begin{align*}
I(r,T)+u_2(r,T) \leq &\frac{1}{2}I (r,T)+\frac{1}{6}I (r,T)+C_5 T^{4/5} u_0^{7/5}(r,T)
\end{align*}
for all $r_0/2\leq r\leq r_0$ and $ 0<T\leq T_0$. This completes the proof.
\end{proof}

\bigskip
After this preparation, we are in a position to prove our main results.

\begin{proof}[Proof of Theorem~\ref{MT1}]
Since \eqref{eq:problem} is invariant with respect to translations, we may assume that $a=0$ so that ${\rm supp} (f_0+g_0)\cap (-r_0,r_0)=\emptyset.$ Then $w^{4/3}(0,x)=0$ for $x\in (-r_0,r_0)$ and $I(r_0,t)\to 0$ as $t\to 0$,  cf. \eqref{WLB}. Consequently there is $T_0>0$ such that the condition \eqref{COND1} in Lemma~\ref{LA4} is satisfied.

Let $T\in(0,T_0)$. In view of 
$$
\big[x\mapsto \|\p_xw(\cdot, x)\|^2_{L_2(0,T)} \big]\in  L_1(\R),
$$
we have
\begin{equation}\label{DEDE}
\p_r  u_2(r,T)=2u_1(r,T),\qquad \p_r  u_1(r,T)= u_0(r,T) \qquad\text{for  a.e. $r\in(0,r_0),$}
\end{equation}
and the H\"older inequality yields
$$
u_1(r,T)\leq u_2^{1/2}(r,T)u_0^{1/2}(r,T).
$$
This inequality, together with \eqref{**} which is valid here thanks to the choice of $T_0$, gives
$$
u_1(r,T)\leq CT^{2/5} u_0^{6/5}(r,T) = CT^{2/5} (\p_r u_1(r,T))^{6/5} \qquad\text{for  a.e.  $r\in(r_0/2,r_0).$}
$$
Equivalently
\begin{equation}
u_1^{5/6}(r,T)\leq  \frac{1}{6C_6} T^{1/3} \p_ru_1(r,T) \qquad\text{for a.e. $r\in(r_0/2,r_0).$}\label{corelli}
\end{equation}

Taking a smaller value of $T_0$ if necessary, we further assume that 
\begin{equation}
T_0^{1/3} u_1^{1/6}(r_0,T_0) \le C_6 \frac{r_0}{2}. \label{COND2}
\end{equation}
Let $T\in (0, T_0]$ and assume for contradiction that $u_1(r_0/2,T)>0$. Together with the monotonicity properties of $u_1$ this implies that $u_1(r,T)>0$ for all $r\in[r_0/2,r_0]$. Thanks to this positivity property we infer from \eqref{corelli} that
$$
C_6 T^{-1/3}\leq \p_r\left( u_1^{1/6} \right)(r,T) \qquad\text{for  a.e.  $r\in(r_0/2,r_0).$}
$$
After integration we end up with
$$
C_6 T^{-1/3}(r_0-r) \le u_1^{1/6}(r_0,T) - u_1^{1/6}(r,T),
$$
or equivalently
\begin{equation}
T^{1/3} u_1^{1/6}(r,T) \le T^{1/3} u_1^{1/6}(r_0,T) - C_6 (r_0-r), \qquad r\in [r_0/2,r_0]. \label{vivaldi}
\end{equation}
Taking $r=r_0/2$ in \eqref{vivaldi} gives
$$
0 < T_0^{1/3} u_1^{1/6}(r_0,T_0) - C_6 \frac{r_0}{2},
$$
and contradicts \eqref{COND2}. Therefore $u_1(r_0/2,T)=0$ and it follows from \eqref{DEDE} that $u_0(r_0/2,T)=0$ for all 
 $T\in (0,T_0]$. Recalling \eqref{**} we find that 
  $I(r_0/2,T)=0$ for all $T\in (0,T_0]$. 

We further note that, in view of Theorem~\ref{T:1}~$(b)$ and \eqref{haendel},
\begin{align*}
u_1(r,T) & \le r \int_0^T \|\p_x w(t)\|_2^2\, dt \le C r \left[ \mathcal{E}(f_0,g_0) - \mathcal{E}(f(T),g(T)) \right] 
\le C r \mathcal{E}(f_0,g_0), \\
I(r,T) & \le r^2 \sup_{t\in [0,T]} \|w(t)\|_{4/3}^{4/3} \le r^2 \sup_{t\in [0,T]} \mathcal{E}(f(t),g(t)) 
\le r^2 \mathcal{E}(f_0,g_0),
\end{align*}
so that \eqref{COND1} and \eqref{COND2} are satisfied provided $T_0 = C_* r_0^{5/2}/ \E^{1/2}(f_{0},g_{0})$ for a sufficiently small constant $C_*>0$ depending only on $R$ and $R_\mu$. This proves the first claim of Theorem~\ref{MT1}.

Finally, let ${\rm supp\,} (f_0+g_0)\subset [-b_0,b_0]$, with $b_0>0,$ 
and let $T>0$ be fixed. 
Choosing $r_0=(T\E^{1/2}(f_0,g_0)/C_*)^{2/5}$, we have that, for each   $a\geq b_0+ (T\E^{1/2}(f_0,g_0)/C_*)^{2/5}$ 
[resp. $a\leq -b_0- (T\E^{1/2}(f_0,g_0)/C_*)^{2/5}$]
\[{\rm supp\,} (f_0+g_0)\,\cap\, (a-r_0,a+r_0)=\emptyset.\]
We then infer from the first statement of Theorem \ref{MT1} that ${\rm supp\,} (f(T)+g(T))\,\cap\, (a-r_0/2,a+r_0/2)=\emptyset,$
 from which   follows that
 \begin{align}\label{No}{\rm supp \,} (f(T)+g(T))\subset\left[ -b_0-\frac{T^{2/5}\E^{1/5}(f_0,g_0)}{2C_*^{2/5}},
b_0+\frac{ T^{2/5}\E^{1/5}(f_0,g_0)}{2C_*^{2/5}} \right]
\qquad\text{for all $T>0$.}\end{align}
 Consequently,   $(f(T)+g(T))$
is compactly supported for each $T\geq0$ and we set
\[
\beta(T):=\max\big\{b_0,\sup\,({\rm supp\, }(f(T)+g(T)))\big\}.
\]
It then follows that    $\beta(T)\to \beta(0)=b_0$ as $T\to0$.
Since the problem \eqref{eq:S2} is autonomous   the estimate \eqref{No} yields  
\begin{align}\label{BCA}
\beta(T_2)-\beta(T_1)\leq \frac{(T_2-T_1)^{2/5}\E^{1/5}(f(T_1),g(T_1))}{2C_*^{2/5}}\qquad\text{for all $T_2>T_1$.}
\end{align}
Besides, we know from \cite[Theorem 4.1 $(iv)$]{LM15} (after rescaling), that 
\begin{align}\label{BCB}
\E(f(t),g(t))\leq (1+t)^{-1/3}\left[\E(f_0,g_0)+\frac{1}{6}\int_\R\Big(f_0+\frac{R}{R_\mu}g_0\Big)x^2\, dx\right]\leq Ct^{-1/3}\qquad\text{for all $t>0.$}
\end{align} 
Combining \eqref{BCA} and \eqref{BCB} yields
\begin{align*} 
\beta(T_2)-\beta(T_1)\leq C (T_2-T_1)^{2/5}T_1^{-1/15}  \qquad\text{for all $T_2>T_1>0$.}
\end{align*}
We are now in the position to apply \cite[Lemma 7.6]{Be96} to the above functional inequality and conclude that
there exists  a positive constant $C_*0$ depending only on $R,R_\mu, f_0,$ and $g_0$ such that
\[
\beta(T)\leq b_0+C^*T^{1/3}\quad\text{for all $T>0$,}
\]
which is the expected propagation rate. 
The estimate for the expansion of the left boundary of the support is derived in a similar way. 
\end{proof} \medskip

\begin{proof}[Proof of Theorem~\ref{MT2}]
Invoking \eqref{WLB}, Theorem~\ref{MT2} is a particular case of the more general result \cite[Theorem~1.2]{DPGG03} which we apply with $k=1$, $p=2$, and $q=4/3$.
\end{proof}

\section{Weak solutions satisfying the local energy estimate}\label{Sec3}

As already mentioned in the Introduction, we now check that there exists at least a weak solution to \eqref{eq:problem}
satisfying the local energy estimate \eqref{WL2}.  
\begin{thm}[Existence of weak solutions]\label{T:1}  
Given    $(f_0,g_0)\in\cK^2,$ where $\cK^2$ is defined in \eqref{eq:sup2},   there exists at least   a weak solution $(f,g)$
to \eqref{eq:problem}, satisfying the local energy estimate \eqref{WL2}
 as well as the following estimates
\begin{align*}
(a)\quad &   \|f(T)\|_1=\|f_0 \|_1, \, \|g(T)\|_1=\|g_0 \|_1 ,  \\[1ex]
(b) \quad & \mathcal{H}(f(T),g(T)) +\frac{R}{1+2R}\int_0^T\int_\R\big[  |\p_xf|^2 + R |\p_x(f+g)|^2 \big]\, dx\, dt\leq  \mathcal{H}(f_0,g_0),\\[1ex]
(c)\quad &\E(f(T), g(T))
+ \frac{1}{2}\int_{0}^T\int_\R\big[f\left((1+R)\p_xf+R\p_x g\right)^2+RR_\mu g(\p_xf+\p_x g)^2\big]\, dx\,dt \leq \E(f_{0},g_{0})
\end{align*}
 for   all $T\in(0,\infty)$.
  The energy functional  $\E$ is given by \eqref{eq:sup1} and the entropy functional $\mathcal{H}$
  is defined as 
  \begin{equation*} 
\mathcal{H}(f,g) := \int_\R \Big( f \ln{f} +\frac{R}{R_\mu} g \ln{g} \Big)\, dx.
\end{equation*}
\end{thm}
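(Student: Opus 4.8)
The plan is to obtain the desired weak solution as a limit of smooth solutions to a uniformly parabolic regularisation of \eqref{eq:problem}, in the spirit of the scheme in \cite{ELM11}. For $\e\in(0,1)$ I would replace the mobilities $f$ and $g$ in \eqref{eq:S2} by $f+\e$ and $g+\e$, and replace $(f_0,g_0)$ by smooth, strictly positive data $(f_{0,\e},g_{0,\e})$ with $f_{0,\e}\to f_0$ and $g_{0,\e}\to g_0$ in $\cK$, $\|f_{0,\e}\|_1=\|f_0\|_1$, $\|g_{0,\e}\|_1=\|g_0\|_1$, and a uniform bound on $\E(f_{0,\e},g_{0,\e})$, $\mathcal{H}(f_{0,\e},g_{0,\e})$ and on the second moments of $f_{0,\e}$ and $g_{0,\e}$. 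Since $R$ and $R_\mu$ are positive, the diffusion matrix of the regularised system is uniformly elliptic as long as the mobilities are bounded away from $0$, so classical parabolic theory for quasilinear systems provides a unique smooth solution $(f_\e,g_\e)$, which remains non-negative (arguing as in \cite{ELM11,LM13}) and, thanks to the a priori bounds of the next step, is global in time.

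The heart of the matter is a family of estimates uniform in $\e$. Integrating the two regularised equations over $\R$ gives conservation of $\|f_\e(t)\|_1$ and $\|g_\e(t)\|_1$, whence $(a)$ in the limit (using the tightness provided by the second moments). Using $\ln(f_\e+\e)$ and $(R/R_\mu)\ln(g_\e+\e)$ as test functions and summing yields the entropy inequality underlying $(b)$, which — together with control of $\int(f_\e+(R/R_\mu)g_\e)x^2\,dx$ along the flow, as in \cite{LM15}, to bound $\mathcal{H}(f_\e,g_\e)$ from below — provides a uniform bound on $(f_\e,g_\e)$ in $L_2(0,T;H^1(\R;\R^2))$. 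Using the ``pressures'' $(1+R)f_\e+Rg_\e$ and $R(f_\e+g_\e)$ as test functions and summing produces the energy dissipation inequality $(c)$ together with an $L_\infty(0,T;L_2(\R;\R^2))$ bound. Finally, \eqref{WL2} is obtained by repeating this last computation with the localised test functions $\zeta^2\big[(1+R)f_\e+Rg_\e\big]$ and $R\zeta^2(f_\e+g_\e)$ for $\zeta\in W^1_4(\R)$: after integrating by parts, the terms in which the derivative hits $\zeta^2$ are
\[
-\int_\R 2\zeta\,\p_x\zeta\,(f_\e+\e)\big[(1+R)f_\e+Rg_\e\big]\,\p_x\big[(1+R)f_\e+Rg_\e\big]\,dx
\]
and its analogue for the second equation, and Young's inequality bounds each of them by one half of the corresponding dissipation term plus the boundary contribution $2\int_\R(f_\e+\e)\big((1+R)f_\e+Rg_\e\big)^2|\p_x\zeta|^2\,dx$, respectively $2RR_\mu\int_\R(g_\e+\e)(f_\e+g_\e)^2|\p_x\zeta|^2\,dx$; multiplying the resulting inequality by $2$ produces \eqref{WL2} for $(f_\e,g_\e)$ with the stated constant $4$, up to terms proportional to $\e$ that are controlled by the energy bound and vanish as $\e\to0$. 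The same computation with $\zeta\equiv1$ gives the exact version of $(c)$.

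It then remains to pass to the limit $\e\to0$. The uniform bounds above, together with the temporal equicontinuity of $t\mapsto\langle(f_\e,g_\e)(t),\xi\rangle$ (for $\xi$ in a countable dense set) read off from the weak formulation and the tightness coming from the second moments, allow one to extract — via an Aubin--Lions type argument on $\R$ — a subsequence along which $(f_\e,g_\e)\to(f,g)$ strongly in $L_2((0,T)\times\R;\R^2)$ and in $C([0,T];H^{-1}(\R;\R^2))$, weakly in $L_2(0,T;H^1(\R;\R^2))$ and weakly-$*$ in $L_\infty(0,T;L_2(\R;\R^2))$, for every $T>0$, with $(f,g):[0,\infty)\to\cK^2$ and $(f,g)(0)=(f_0,g_0)$. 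Strong $L_2$ convergence of $f_\e$ (hence of $f_\e+\e$) together with weak $L_2$ convergence of $\p_xf_\e$ and $\p_xg_\e$ lets me identify the limits of the nonlinear fluxes in \eqref{T2}, so $(f,g)$ is a weak solution in the sense of Definition~\ref{Def1}. The estimates $(a)$--$(c)$ and \eqref{WL2} survive in the limit: on the right-hand sides one combines strong convergence with the vanishing of the $\e$-terms, while on the left-hand sides one uses weak lower semicontinuity of the $L_2$ norm and of the convex dissipation functionals, writing the dissipation in \eqref{WL2} as $\big\|\zeta\sqrt{f_\e}\,\p_x[(1+R)f_\e+Rg_\e]\big\|_2^2+RR_\mu\big\|\zeta\sqrt{g_\e}\,\p_x(f_\e+g_\e)\big\|_2^2$ and noting that $\sqrt{f_\e}\to\sqrt f$ and $\sqrt{g_\e}\to\sqrt g$ strongly in $L_2$ while the bracketed gradient terms converge weakly.

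I expect the main obstacle to be the derivation of the local energy estimate \eqref{WL2} at the regularised level \emph{with constants independent of $\e$}: this forces the regularisation to be chosen so as not to spoil the gradient--flow structure of \eqref{eq:S2} that underlies the energy identity $(c)$, and it requires the localised test functions $\zeta^2[(1+R)f_\e+Rg_\e]$ and $R\zeta^2(f_\e+g_\e)$, which are admissible only because $(f_\e,g_\e)$ is smooth with sufficient decay. A secondary difficulty is passing to the limit in the degenerate dissipation terms in the absence of an $L_\infty$-in-space bound on $f_\e$ and $g_\e$; this is handled through the strong $L_2$ convergence of $(f_\e,g_\e)$ and weak lower semicontinuity rather than through a comparison principle, none being available for the system \eqref{eq:S2}.
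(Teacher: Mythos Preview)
Your overall architecture --- regularise, derive uniform estimates (mass, entropy, energy, second moments, localised energy), and pass to the limit by compactness and weak lower semicontinuity --- matches the paper's. The route, however, is genuinely different in two respects.

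First, the paper does \emph{not} work directly on $\R$ with the mobility shift $f\mapsto f+\e$, $g\mapsto g+\e$. It first truncates to $(-L,L)$ and uses the regularisation of \cite{ELM11}, in which the cross--diffusion gradients are smoothed by the resolvent $\mathcal{R}_\e=(1-\e^2\p_x^2)^{-1}$ and the cross mobilities are $f_\e-\e$, $g_\e-\e$. The point of this choice is that the principal part of the regularised system becomes \emph{diagonal} (two porous--medium operators), so Amann's quasilinear theory on the bounded interval and separate comparison arguments give global classical solutions with $f_\e\ge\e$, $g_\e\ge\e$. With your simpler shift the cross terms remain of principal order and the system is genuinely coupled; parabolicity still holds, but neither the global existence of smooth solutions on $\R$ nor the non--negativity of $(f_\e,g_\e)$ follows from ``arguing as in \cite{ELM11}'', since that argument relies precisely on the decoupling induced by $\mathcal{R}_\e$. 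This is the step of your sketch that would need a separate justification.

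Second, the price the paper pays for its regularisation is that the localised energy computation in Lemma~\ref{L:4} produces an additional error term $K_\e(T)$ coming from the mismatch between $f_\e,g_\e$ and $F_\e=\mathcal{R}_\e f_\e$, $G_\e=\mathcal{R}_\e g_\e$; a good part of Section~\ref{Sec3} is spent showing $K_{\e_k}(T)\to0$, using duality bounds on $\p_t F_\e,\p_t G_\e$ and elliptic estimates for $\mathcal{R}_\e$. Your computation with the test functions $\zeta^2[(1+R)f_\e+Rg_\e]$ and $R\zeta^2(f_\e+g_\e)$ is cleaner exactly because no such mismatch appears, and your derivation of the constant $4$ via Young's inequality is the same as the paper's. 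Finally, the paper performs a second limit $L\to\infty$, using a truncated second--moment test function to obtain tightness (Lemma~\ref{L36}); you instead build the second--moment control directly on $\R$, which plays the same role. In short: your approach trades a harder existence/positivity step for a simpler localised energy estimate, while the paper's resolvent scheme does the opposite.
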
\medskip 

The remainder of this section is devoted to the proof of Theorem \ref{T:1}.
We split the proof of Theorem \ref{T:1} in two steps: we first truncate the spatial domain to a finite interval
$(-L,L)$, for some arbitrary  $L>0$, and then introduce a regularized system having global classical solutions.

\subsection{A regularized problem}\label{S3a}

To be more precise, given  $L>0$   and $\e\in(0,1)$  we define  the Hilbert space
\[
H^2_\B:=\{u\in H^2((-L,L))\,:\, \p_x u(\pm L)=0\} 
\] 
and we note that  the elliptic operator  $(1-\e^2\p_x^2):H^2_\B\to L_2((-L,L))$ is an isomorphism.
Setting
\begin{equation}
\mathcal{R}_\e[u]:=(1-\e^2\p_x^2)^{-1} u \in H^2_\B \qquad\text{for $u\in L_2((-L,L))$,}
\label{spirou}
\end{equation}
we consider the following regularized problem
\begin{subequations}\label{eq:RS}
 \begin{equation}\label{eq:A1}
\left\{
\begin{array}{rcl}
\p_t f_\e&=&(1+R)\p_x\left(   f_\e\p_x{f_\e}\right)+R\p_x\left((f_\e-\e)\p_x \mathcal{R}_\e[g_\e]\right),\\[1ex]
\p_tg_\e&=&R_\mu\p_x\left(   (g_\e-\e)\p_x \mathcal{R}_\e[f_\e]\right)+R_\mu \p_x\left(g_\e\p_x{g_\e}\right),
\end{array}
\right.
{ \quad (t,x)\in (0,\infty)\times (-L,L),}
\end{equation}
supplemented with homogeneous Neumann boundary conditions  
\begin{equation}\label{A:2}
\p_xf_\e(t,\pm L) =\p_x g_\e(t,\pm L)=0 , \qquad t\in (0,\infty)  ,
\end{equation}
and with regularized initial data
\begin{equation}\label{eq:A3}
f_\e(0)=f_{0\e}:=\mathcal{R}_\e[f_0{\bf 1}_{(-L,L)}]+\e,\qquad g_\e(0)=g_{0\e}:=\mathcal{R}_\e[g_0{\bf 1}_{(-L,L)}]+\e.
\end{equation}
\end{subequations}
Clearly, the regularized initial data satisfy $(f_{0\e}, g_{0\e})\in H^2_\B\times H^2_\B$ and  
\begin{equation}\label{eq:id0} 
f_{0\e}\geq \e,\qquad  g_{0\e}\geq\e.
\end{equation}

The solvability of problem \eqref{eq:RS} is studied in \cite[Theorem 2.1]{ELM11} with the help of the quasilinear parabolic 
theory developed in \cite{Am93} and we recall the result now. 

\begin{prop}\label{P:3} The problem~\eqref{eq:RS} has a unique non-negative classical solution 
\[
f_\e, g_\e\in C([0,\infty); H^1((-L,L)))\cap C((0,\infty); H^2_\B)\cap  C^1((0,\infty); L_2((-L,L))).
 \]
 Moreover, we have 
 \[
f_\e\geq\e, \qquad g_\e\geq \e\qquad\text{for all }\ (t,x)\in (0,\infty)\times(-L,L),
\]
and
\[\|f_\e(t)\|_1= \|f_{0\e}\|_1=\|f_0{\bf 1}_{(-L,L)}\|_1+2\e L, \quad \|g_\e(t)\|_1=\|g_{0\e}\|_1=\|g_0{\bf 1}_{(-L,L)}\|_1+2\e L\]
for all $t\geq0$.
\end{prop}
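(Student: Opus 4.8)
To prove Proposition~\ref{P:3}, the plan is to treat \eqref{eq:RS} as a quasilinear parabolic system and to invoke the abstract theory of \cite{Am93}, following \cite[Theorem~2.1]{ELM11}. One first rewrites \eqref{eq:A1} in the form $\p_t u = \p_x\big(\A(u)\,\p_x u\big) + \p_x\big(b(u)\big)$ for $u = (f_\e,g_\e)$. Carrying out the differentiations and using the identity $\e^2\p_x^2\mathcal{R}_\e[v] = \mathcal{R}_\e[v] - v$ implied by \eqref{spirou}, one sees that the genuinely second--order part of the right--hand side is \emph{diagonal}, with diffusion matrix $\A(u) = \mathrm{diag}\big((1+R)f_\e,\ R_\mu g_\e\big)$, the two equations being coupled only through the fixed smoothing operator $\mathcal{R}_\e$, which acts as a bounded lower--order perturbation. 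Hence the system is normally elliptic on the open set $\{f_\e>0,\ g_\e>0\}$; to make ellipticity unconditional one may replace $f_\e$, $g_\e$ in the coefficient positions by $\max\{f_\e,\e/2\}$, $\max\{g_\e,\e/2\}$, rendering the problem uniformly parabolic, the truncation being removed a posteriori once the bounds of the next paragraph are available. Since $(f_{0\e},g_{0\e})\in H^2_\B\times H^2_\B$ with $f_{0\e},g_{0\e}\ge\e$ by \eqref{eq:id0}, Amann's theory then yields a unique maximal classical solution $(f_\e,g_\e)$ on some interval $[0,T_{\max})$ with the regularity claimed in the statement.

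For the pointwise lower bounds, observe that the constant pair $(\e,\e)$ solves \eqref{eq:A1}--\eqref{A:2} identically, both fluxes vanishing when $f_\e\equiv\e$ (resp.\ $g_\e\equiv\e$). Substituting $g_\e = w + \e$ into the second equation, one finds that $w$ satisfies the scalar linear parabolic equation
\[
\p_t w - R_\mu g_\e\,\p_x^2 w - R_\mu\big(\p_x w + \p_x\mathcal{R}_\e[f_\e]\big)\p_x w - R_\mu\big(\p_x^2\mathcal{R}_\e[f_\e]\big)w = 0
\]
on $(0,T_{\max})\times(-L,L)$, with $\p_x w(\pm L) = 0$ and $w(0) = g_{0\e}-\e\ge 0$. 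For the truncated problem this equation is uniformly parabolic with coefficients that are continuous on $[\delta,T]\times[-L,L]$ for every $0<\delta<T<T_{\max}$ (note $\p_x^2\mathcal{R}_\e[f_\e] = \e^{-2}(\mathcal{R}_\e[f_\e]-f_\e)$ is continuous up to $t=0$, while $\p_x f_\e$, $\p_x w$ are continuous for $t>0$), so the parabolic maximum principle gives $w\ge0$, i.e.\ $g_\e\ge\e$; the same argument applied to $f_\e$ gives $f_\e\ge\e$. In particular the truncation is inactive and $(f_\e,g_\e)$ solves the original system \eqref{eq:RS}.

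Conservation of mass follows by integrating the two equations of \eqref{eq:A1} over $(-L,L)$: each of the fluxes $f_\e\p_x f_\e$, $(f_\e-\e)\p_x\mathcal{R}_\e[g_\e]$, $(g_\e-\e)\p_x\mathcal{R}_\e[f_\e]$, $g_\e\p_x g_\e$ vanishes at $x=\pm L$, the factors $\p_x f_\e$, $\p_x g_\e$ because of \eqref{A:2} and the factors $\p_x\mathcal{R}_\e[f_\e]$, $\p_x\mathcal{R}_\e[g_\e]$ because $\mathcal{R}_\e$ maps into $H^2_\B$. Hence $t\mapsto\int_{-L}^L f_\e\,dx$ and $t\mapsto\int_{-L}^L g_\e\,dx$ are constant, and the stated values follow from \eqref{eq:A3} together with the fact that $\mathcal{R}_\e$ preserves the $L_1$--norm of non-negative functions, its Green kernel (that of $1-\e^2\p_x^2$ on $(-L,L)$ with Neumann conditions) being non-negative with unit integral: $\|f_{0\e}\|_1 = \|\mathcal{R}_\e[f_0{\bf 1}_{(-L,L)}]\|_1 + 2\e L = \|f_0{\bf 1}_{(-L,L)}\|_1 + 2\e L$, and likewise for $g_{0\e}$.

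It remains to show $T_{\max}=\infty$. By Amann's continuation criterion it suffices to bound $(f_\e(t),g_\e(t))$, on each bounded time interval, in the relevant interpolation norm and away from the boundary $\{f_\e=0\}\cup\{g_\e=0\}$ of the parabolicity region; the lower bound $f_\e,g_\e\ge\e$ together with mass conservation handles this from below, while an upper bound is supplied by the a priori estimates for \eqref{eq:RS} --- a regularized energy inequality for $\E$ and a regularized entropy inequality, obtained by testing \eqref{eq:A1} with $(f_\e,g_\e)$ and with $(\ln f_\e,\,(R/R_\mu)\ln g_\e)$, which in one space dimension control $\|f_\e\|_{H^1}$, $\|g_\e\|_{H^1}$ and hence $\|f_\e\|_\infty$, $\|g_\e\|_\infty$ on every $[0,T]$. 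The main obstacle is really the first step: casting \eqref{eq:RS} precisely into the functional framework of \cite{Am93} (choice of base space and interpolation scale, the required continuity and differentiability of $u\mapsto(\A(u),b(u))$, normal ellipticity), after which local well-posedness and the continuation criterion are standard and the maximum-principle and mass arguments are routine.
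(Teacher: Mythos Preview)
Your sketch is correct and follows exactly the route the paper indicates: the paper does not prove Proposition~\ref{P:3} but cites \cite[Theorem~2.1]{ELM11}, whose argument is precisely the one you outline --- cast \eqref{eq:RS} as a quasilinear parabolic system with diagonal principal part (the cross-coupling through $\mathcal{R}_\e$ being of lower order since $\e^2\p_x^2\mathcal{R}_\e[v]=\mathcal{R}_\e[v]-v$), apply Amann's theory for local well-posedness, obtain the lower bounds $f_\e,g_\e\ge\e$ by a scalar maximum-principle argument on $f_\e-\e$ and $g_\e-\e$, read off mass conservation from the divergence structure and the Neumann conditions, and globalize via the regularized energy/entropy estimates. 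A minor point worth tightening: your truncation device and the subsequent maximum-principle step should be phrased consistently (the linear equation you write for $w=g_\e-\e$ should carry the truncated coefficient $\max\{g_\e,\e/2\}$ in the second-order term until the bound $g_\e\ge\e$ has been established), but this is cosmetic and does not affect the argument.
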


The solutions constructed in Proposition \ref{P:3} enjoy additional properties, cf.  \cite[Lemmas 2.4 $\&$ 2.6]{ELM11}. 
 
\begin{lemma}\label{L:3} Given $T\in(0,\infty)$, it holds
 \begin{equation}\label{eq:GE3}
\mathcal{H} (f_\e(T), g_\e(T))+\int_0^T\int_{-L}^L \Big( \frac{1}{2} |\p_xf_\e|^2+ \frac{R}{1+2R} |\p_x g_\e|^2 \Big) \, dx\, dt\leq \mathcal{H} (f_\e(0), g_\e(0))
\end{equation}
and
\begin{eqnarray}
\mathcal{E}_{\e}(f_\e(T),g_\e(T)) & + & \int_0^T \int_{-L}^L\big[ f_\e \left| (1+R) \p_x f_\e + R \p_x G_\e \right|^2 + RR_\mu g_\e \left| \p_x(F_\e+g_\e) \right|^2 \big]\, dx\, dt \nonumber \\
& \le & \mathcal{E}_{\e}(f_{0\e},g_{0\e}) + \e C_2\ \int_0^T\int_{-L}^L \big( |\p_x f_\e |^2 +  |\p_x g_\e |^2\big)\, dx\, dt, \label{gaston}
\end{eqnarray}
with 
\begin{align*}
\mathcal{E}_{\e}(f_\e,g_\e) &:=\frac{1}{2}
\left[(1+R) \|f_\e\|_2^2 + R \|g_\e\|_2^2 + R \int_{-L}^L \left( F_\e g_\e + G_\e f_\e \right)\, dx\right],\\
\mathcal{H}(f_\e,g_\e)&:=\int_{-L}^L \Big( f_\e \ln{f_\e} +\frac{R}{R_\mu}  g_\e \ln{g_\e} \Big)\, dx.
\end{align*}
\end{lemma}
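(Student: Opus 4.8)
The plan is to obtain \eqref{eq:GE3} and \eqref{gaston} as differential inequalities for $\mathcal{H}(f_\e(t),g_\e(t))$ and $\mathcal{E}_\e(f_\e(t),g_\e(t))$, respectively, by testing the two equations in \eqref{eq:A1} against well-chosen multipliers. The regularity from Proposition~\ref{P:3} — namely $f_\e,g_\e\in C((0,\infty);H^2_\B)\cap C^1((0,\infty);L_2((-L,L)))$ together with $f_\e\ge\e$ and $g_\e\ge\e$ — makes all the computations below legitimate, first on any interval $[t_0,T]$ with $t_0>0$ and then, since $f_\e,g_\e\in C([0,\infty);H^1((-L,L)))$ and $H^1((-L,L))\hookrightarrow L_\infty((-L,L))$, after letting $t_0\to0$. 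Two ingredients will be used repeatedly: first, $0\le(f_\e-\e)/f_\e\le1$ and $0\le(g_\e-\e)/g_\e\le1$; second, the resolvent bound
\[
\|\p_x\mathcal{R}_\e[u]\|_{L_2((-L,L))}\le\|\p_x u\|_{L_2((-L,L))}\qquad\text{for all }u\in H^1((-L,L)),
\]
which follows by multiplying $(1-\e^2\p_x^2)\mathcal{R}_\e[u]=u$ by $-\p_x^2\mathcal{R}_\e[u]$, integrating over $(-L,L)$, and using $\p_x\mathcal{R}_\e[u](\pm L)=0$, so that no boundary term arises. I abbreviate $F_\e:=\mathcal{R}_\e[f_\e]$, $G_\e:=\mathcal{R}_\e[g_\e]$, all $L^2$-norms are over $(-L,L)$, and the Neumann conditions \eqref{A:2} together with $F_\e,G_\e\in H^2_\B$ ensure that every integration by parts below is free of boundary contributions.

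For \eqref{eq:GE3} I would multiply the first equation in \eqref{eq:A1} by $\ln f_\e$ and the second by $(R/R_\mu)\ln g_\e$, integrate over $(-L,L)$, and add. Because $\int_{-L}^L\p_t f_\e\,dx=\int_{-L}^L\p_t g_\e\,dx=0$ by Proposition~\ref{P:3}, the left-hand sides add up to $\tfrac{d}{dt}\mathcal{H}(f_\e,g_\e)$, while one integration by parts in each equation, combined with $\p_x\ln f_\e=\p_xf_\e/f_\e$ and $\p_x\ln g_\e=\p_xg_\e/g_\e$, turns the right-hand side into
\[
-(1+R)\|\p_xf_\e\|_2^2-R\|\p_xg_\e\|_2^2-R\int_{-L}^L\Big(\frac{f_\e-\e}{f_\e}\,\p_xf_\e\,\p_xG_\e+\frac{g_\e-\e}{g_\e}\,\p_xg_\e\,\p_xF_\e\Big)\,dx.
\]
Using the two ingredients above, the last integral is bounded in absolute value by $2\|\p_xf_\e\|_2\|\p_xg_\e\|_2$, and Young's inequality in the form $2ab\le\frac{1+2R}{2R}a^2+\frac{2R}{1+2R}b^2$ — whose weights are picked precisely so that $(1+R)-\frac{1+2R}{2}=\frac12$ and $R-\frac{2R^2}{1+2R}=\frac{R}{1+2R}$ — yields $\tfrac{d}{dt}\mathcal{H}(f_\e,g_\e)+\frac12\|\p_xf_\e\|_2^2+\frac{R}{1+2R}\|\p_xg_\e\|_2^2\le0$. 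Integrating on $(0,T)$ gives \eqref{eq:GE3}.

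For \eqref{gaston} I would test the first equation by $(1+R)f_\e+RG_\e$ and the second by $R(g_\e+F_\e)$. The self-adjointness of $\mathcal{R}_\e$ on $L_2((-L,L))$ (so that $\int_{-L}^L G_\e\,\p_tf_\e\,dx=\int_{-L}^L g_\e\,\p_tF_\e\,dx$, and likewise with $f_\e$ and $g_\e$ interchanged) shows that the sum of the two left-hand sides is exactly $\tfrac{d}{dt}\mathcal{E}_\e(f_\e,g_\e)$. After one integration by parts in each equation, the leading part of the right-hand side reproduces the expected dissipation $-\int_{-L}^L f_\e|(1+R)\p_xf_\e+R\p_xG_\e|^2\,dx-RR_\mu\int_{-L}^L g_\e|\p_x(F_\e+g_\e)|^2\,dx$, while the shifts $-\e$ in the mobilities of \eqref{eq:A1} produce the extra contribution
\[
\e\Big(R(1+R)\!\int_{-L}^L\!\p_xf_\e\,\p_xG_\e\,dx+R^2\!\int_{-L}^L\!|\p_xG_\e|^2\,dx+RR_\mu\!\int_{-L}^L\!\p_xg_\e\,\p_xF_\e\,dx+RR_\mu\!\int_{-L}^L\!|\p_xF_\e|^2\,dx\Big).
\]
By Cauchy--Schwarz, Young's inequality and the resolvent bound, this is dominated by $\e C_2\big(\|\p_xf_\e\|_2^2+\|\p_xg_\e\|_2^2\big)$ for a constant $C_2=C_2(R,R_\mu)$, and integrating the resulting differential inequality on $(0,T)$ yields \eqref{gaston}.

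None of this is hard analytically — on the fixed bounded interval the equations are uniformly parabolic and the solutions are classical — so the only genuine care is in the bookkeeping: tracking the cross terms coming from the off-diagonal coupling, choosing the sharp Young weights so that the coefficients $\frac12$ and $\frac{R}{1+2R}$ in \eqref{eq:GE3} emerge exactly, and verifying that the resolvent estimate $\|\p_x\mathcal{R}_\e[u]\|_2\le\|\p_x u\|_2$ holds under the Neumann boundary conditions, so that both the cross terms in the entropy estimate and the $O(\e)$ remainder in the energy estimate can be absorbed. For full rigor one also performs the computations on $[t_0,T]\subset(0,\infty)$ first, as noted above, and passes to the limit $t_0\to0^+$.
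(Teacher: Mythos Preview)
Your argument is correct and is precisely the standard one: the paper does not prove Lemma~\ref{L:3} itself but simply refers to \cite[Lemmas~2.4 \& 2.6]{ELM11}, and your choice of multipliers ($\ln f_\e$, $(R/R_\mu)\ln g_\e$ for the entropy; $(1+R)f_\e+RG_\e$, $R(F_\e+g_\e)$ for the energy), the use of the self-adjointness of $\mathcal{R}_\e$ to identify $\tfrac{d}{dt}\mathcal{E}_\e$, and the resolvent bound $\|\p_x\mathcal{R}_\e[u]\|_2\le\|\p_x u\|_2$ to absorb both the cross terms and the $O(\e)$ remainder are exactly what that reference does. The sharp Young weights you display are also the correct ones yielding the coefficients $\tfrac12$ and $\tfrac{R}{1+2R}$.
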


As a consequence of Proposition \ref{P:3} and Lemma \ref{L:3}, the following result is proved in \cite{ELM11}.
\begin{prop}[Weak solutions on a finite interval] \label{P:4} 
There exist a sequence $\e_k\to0$ and a pair   $(f,g) $   
 satisfying
 \begin{itemize}
\item[$(i)$] $f\ge 0$, $g\ge 0$ in $(0,\infty)\times (-L,L),$\\[-2ex]
\item[$(ii)$] $f,g\in L_\infty(0,\infty;L_2((-L,L)))\cap L_2(0,\infty; H^1((-L,L))),$\\[-2ex]
\item[$(iii)$] $f_{\e_k}\to f,$ $ g_{\e_k}\to g$    in $L_2((0,T\times(-L,L)),$
\end{itemize}
 and 
\begin{align}
&\int_{-L}^L f(T)\xi\, dx-\int_{-L}^L f_0\xi\, dx= { -} \int_0^T\int_{-L}^L f\left((1+R)\p_xf+R  \p_xg\right)\p_x\xi\, dx\,dt,\label{FEQ}\\[1ex]
&\int_{-L}^L g(T)\xi\, dx-\int_{-L}^L g_0\xi\, dx={ -} R_\mu\int_0^T\int_{-L}^L g\left(\p_xf+  \p_xg\right)\p_x\xi\, dx\, dt\label{GEQ}
\end{align}
for all $\xi\in W^1_4((-L,L)) $ and all $T>0$. 
Moreover  
\begin{align*}
(a)\quad & \|f(T)\|_1=\|f_0{\bf 1}_{(-L,L)}\|_1, \, \|g(T)\|_1=\|g_0{\bf 1}_{(-L,L)}\|_1, \\[1ex]
(b) \quad &\mathcal{H} (f(T),g(T))+\int_0^T\int_{-L}^L \Big[\frac{1}{2} |\p_xf|^2 + \frac{R}{1+2R} |\p_xg|^2 \Big]\, dx\, dt\leq  \mathcal{H} (f_0,g_0),\\[1ex]
(c)\quad &\E(f(T), g(T))+\int_0^T\int_{-L}^L \big[ f\left((1+R)\p_xf+R\p_xg\right)^2+RR_\mu g(\p_xf+\p_xg)^2 \big]\, dx\, dt\leq \E(f_{0},g_{0})
\end{align*}
for   all  $T\in(0,\infty)$.
\end{prop}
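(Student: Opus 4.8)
The plan is to pass to the limit $\e\to0$ in the regularized problem~\eqref{eq:A1} (following \cite{ELM11}) by a compactness argument, the main input being the uniform bounds furnished by Lemma~\ref{L:3}. First I would isolate the estimates that are uniform in $\e\in(0,1)$. As the regularized initial data converge, $\mathcal{H}(f_{0\e},g_{0\e})$ stays bounded, so the entropy inequality~\eqref{eq:GE3} controls $(f_\e,g_\e)$ in $L_2(0,\infty;H^1((-L,L)))$ uniformly in $\e$. Inserting this bound into the right-hand side of the energy inequality~\eqref{gaston} shows that its error term $\e C_2\int_0^T\int_{-L}^L(|\p_xf_\e|^2+|\p_xg_\e|^2)\,dx\,dt$ is $O(\e)$, so $\mathcal{E}_\e(f_\e(T),g_\e(T))$ remains bounded; since the diffusion matrix is positive definite, $\mathcal{E}_\e$ dominates $\|f_\e\|_2^2+\|g_\e\|_2^2$ up to the lower order cross terms, which yields a uniform bound in $L_\infty(0,\infty;L_2((-L,L)))$. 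Proposition~\ref{P:3} supplies in addition the $\e$-uniform masses and the positivity $f_\e\ge\e$, $g_\e\ge\e$.

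From these bounds and reflexivity I would extract a sequence $\e_k\to0$ along which $f_{\e_k}\rightharpoonup f$, $g_{\e_k}\rightharpoonup g$ weakly in $L_2(0,\infty;H^1((-L,L)))$ and weakly-$*$ in $L_\infty(0,\infty;L_2((-L,L)))$; this gives~$(ii)$, and~$(i)$ passes to the limit because $f_{\e_k}\ge\e_k$. To reach the strong convergence~$(iii)$ I would invoke the Aubin--Lions--Simon lemma. Reading off the fluxes from~\eqref{eq:A1}, the products $f_\e\p_xf_\e$ and $(f_\e-\e)\p_x\mathcal{R}_\e[g_\e]$, together with their $g$-counterparts, are bounded in $L_{4/3}$ in space-time---$f_\e$ lies in $L_4$ by the one-dimensional Ladyzhenskaya interpolation while $\p_xf_\e$ lies in $L_2$---so $\p_t(f_\e,g_\e)$ is bounded in $L_{4/3}(0,T;W^{-1}_{4/3}((-L,L)))$. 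Combined with the compact embedding $H^1((-L,L))\hookrightarrow\hookrightarrow L_2((-L,L))\hookrightarrow W^{-1}_{4/3}((-L,L))$, this gives relative compactness of $(f_{\e_k},g_{\e_k})$ in $L_2((0,T)\times(-L,L))$ for every $T$, hence~$(iii)$ after a diagonal extraction.

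Passing to the limit in~\eqref{FEQ}--\eqref{GEQ}, the linear and initial terms converge by weak convergence and by $f_{0\e}\to f_0{\bf 1}_{(-L,L)}$. Each flux term is a product of a strongly convergent and a weakly convergent factor: $f_{\e_k}\to f$ strongly in $L_2$ and $\p_xf_{\e_k}\rightharpoonup\p_xf$ weakly in $L_2$, so $f_{\e_k}\p_xf_{\e_k}\rightharpoonup f\p_xf$, and testing against $\p_x\xi\in L_\infty$ is legitimate. The one genuinely new point is the regularization: writing $\p_x\mathcal{R}_\e[g_\e]=\mathcal{R}_\e[\p_xg_\e]$ and splitting $\mathcal{R}_\e[\p_xg_\e]-\p_xg=\mathcal{R}_\e[\p_xg_\e-\p_xg]+(\mathcal{R}_\e[\p_xg]-\p_xg)$, the second summand tends to $0$ strongly in $L_2$ since $\mathcal{R}_\e\to\id$, while the first tends to $0$ weakly since $\p_xg_{\e_k}\rightharpoonup\p_xg$ and $\mathcal{R}_\e$ is a uniformly bounded self-adjoint contraction; together with $(f_\e-\e)\to f$ strongly, this recovers the limiting cross flux $Rf\p_xg$.

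It remains to pass to the limit in~$(a)$--$(c)$. Item~$(a)$ follows from the strong $L_2$, hence $L_1$, convergence on the bounded interval together with the constancy of the masses and the time-continuity of $(f,g)$. For~$(b)$ I would combine Fatou's lemma (the entropy density is bounded below) with the weak lower semicontinuity of the $L_2$ norms of $\p_xf_{\e_k}$, $\p_xg_{\e_k}$ and with $\mathcal{H}(f_{0\e},g_{0\e})\to\mathcal{H}(f_0,g_0)$. I expect the main obstacle to be~$(c)$, whose dissipation $\int_0^T\int_{-L}^L[f_\e|(1+R)\p_xf_\e+R\p_xG_\e|^2+RR_\mu g_\e|\p_x(F_\e+g_\e)|^2]\,dx\,dt$ is quadratic in the gradients but weighted by the degenerate factors $f_\e$, $g_\e$. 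Since this integrand is convex in the gradient variable for each fixed non-negative weight, and since $f_{\e_k}\to f$, $g_{\e_k}\to g$ strongly (hence a.e.) while the gradient combinations converge weakly in $L_2$ to $(1+R)\p_xf+R\p_xg$ and $\p_x(f+g)$ (again using $\mathcal{R}_\e\to\id$), an Ioffe-type lower semicontinuity theorem for integrands convex in the gradient and continuous in the strongly convergent variable delivers the inequality in the limit. The error term on the right of~\eqref{gaston} vanishes by the $O(\e)$ bound of the first step, and $\mathcal{E}_\e(f_{0\e},g_{0\e})\to\E(f_0,g_0)$, which closes the argument.
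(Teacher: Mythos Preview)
The paper does not supply its own proof of this proposition but defers entirely to \cite{ELM11}; your outline is precisely the standard compactness argument carried out there, and the later material in the paper (the convergences \eqref{P1}--\eqref{P6} quoted in the proof of Lemma~\ref{L:4}) confirms that the same ingredients are used. Two minor remarks: the commutation $\p_x\mathcal{R}_\e[g_\e]=\mathcal{R}_\e[\p_xg_\e]$ is not literally correct with the Neumann resolvent on both sides (differentiating turns Neumann into Dirichlet data), but since both resolvents are $L_2$-contractions converging strongly to the identity your splitting argument survives unchanged; and for~$(c)$ the approach in \cite{ELM11}, echoed in \eqref{P3}--\eqref{P4}, bypasses Ioffe's theorem by noting that $U_\e:=\sqrt{f_\e}\,[(1+R)\p_xf_\e+R\p_xG_\e]$ and $V_\e:=\sqrt{g_\e}\,\p_x(F_\e+g_\e)$ are themselves bounded in $L_2$ by the dissipation, identifying their weak limits via the strong convergence of $\sqrt{f_{\e_k}}$, $\sqrt{g_{\e_k}}$, and then invoking weak lower semicontinuity of the $L_2$-norm---functionally equivalent to your argument.
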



\subsection{A local energy estimate}

We now derive a local version of inequality $(c)$ in Proposition \ref{P:4}.
\begin{lemma}\label{L:4} Let $(f,g)$ be the limit  of $((f_{\e_k},g_{\e_k}))_k$ found in Proposition \ref{P:4}. Then
 \begin{align}
& \int_{-L}^L\big[ f^2(T) + R (f+g)^2(T)\big]\zeta^2\, dx\nonumber\\
& \qquad + \int_0^T\int_{-L}^L \big(f \left| (1+R)\p_x f + R \p_xg \right|^2+ RR_\mu g  \left| \p_x f + \p_x g \right|^2\big)\zeta^2\, dx\, dt\nonumber \\
& \leq \int_{-L}^L\big[ f^2 (0) + R (f+g)^2(0)\big]\zeta^2\, dx\nonumber\\
& \qquad +4\int_0^T\int_{-L}^L \big[f \left( (1+R)f+Rg \right)^2+RR_\mu  g \left( f + g \right)^2\big] |\p_x\zeta|^2\, dx\, dt 
\label{REE}
\end{align}
for all $T>0$ and all $\zeta\in W^1_4(  (-L,L) )$.
\end{lemma}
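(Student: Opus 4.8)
The plan is to establish \eqref{REE} first for the regularized classical solutions $(f_\e,g_\e)$ of \eqref{eq:RS} and then to pass to the limit along the subsequence $\e_k\to0$ provided by Proposition~\ref{P:4}. At the $\e$-level one repeats the computation leading to the energy inequality \eqref{gaston} in Lemma~\ref{L:3}, but now inserting the localizing weight $\zeta^2$: one tests the first equation in \eqref{eq:A1} against $\zeta^2\big[(1+R)f_\e+RG_\e\big]$ and the second against $R\zeta^2\big[F_\e+g_\e\big]$, with $F_\e:=\mathcal{R}_\e[f_\e]$ and $G_\e:=\mathcal{R}_\e[g_\e]$. Using the self-adjointness of $\mathcal{R}_\e$, the time-derivative contributions recombine into $\frac{d}{dt}$ of the localized energy $\frac12\int_{-L}^L\big[(1+R)f_\e^2+Rg_\e^2+R(F_\e g_\e+G_\e f_\e)\big]\zeta^2\,dx$, up to error terms stemming from the commutator of $\mathcal{R}_\e$ with multiplication by $\zeta^2$ and from the $\pm\e$ shifts in \eqref{eq:A1}; as in \cite{ELM11} these are controlled by the $\e$-uniform $H^1$-bounds and disappear in the limit. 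The diffusion contributions produce, on the left-hand side, the $\zeta^2$-weighted dissipation $\int_0^T\int_{-L}^L\big(f_\e|(1+R)\p_x f_\e+R\p_x G_\e|^2+RR_\mu g_\e|\p_x(F_\e+g_\e)|^2\big)\,dx\,dt$, the same $\e$-error term as in \eqref{gaston}, and two new cross terms of the form $-2\int_0^T\int_{-L}^L f_\e\,\zeta\,h_\e\,\p_x h_\e\,\p_x\zeta\,dx\,dt$ with $h_\e:=(1+R)f_\e+RG_\e$, together with the analogous cross term for the $g$-component.

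The cross terms are handled by Young's inequality in the form $2|f_\e\,\zeta\,h_\e\,\p_x h_\e\,\p_x\zeta|\le\frac12 f_\e\,\zeta^2|\p_x h_\e|^2+2f_\e\,h_\e^2|\p_x\zeta|^2$ (which makes sense since $f_\e\ge\e>0$), so that one half of the weighted dissipation is kept on the left and the rest is absorbed into the term $2\int_0^T\int_{-L}^L f_\e\big((1+R)f_\e+RG_\e\big)^2|\p_x\zeta|^2$ on the right, and likewise for $g_\e$. Multiplying the resulting inequality by $2$ and noting that $(1+R)f_\e^2+Rg_\e^2+R(F_\e g_\e+G_\e f_\e)\to f^2+R(f+g)^2$ as soon as $F_\e\to f$ and $G_\e\to g$, one arrives at a bound with exactly the structure of \eqref{REE}, valid at the $\e$-level. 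It then remains to let $\e=\e_k\to0$, using the properties collected in Proposition~\ref{P:4}: $f_{\e_k}\to f$ and $g_{\e_k}\to g$ in $L_2((0,T)\times(-L,L))$, hence a.e.\ along a further subsequence; $\p_x f_{\e_k}\rightharpoonup\p_x f$ and $\p_x g_{\e_k}\rightharpoonup\p_x g$ weakly in $L_2((0,T)\times(-L,L))$; $\mathcal{R}_{\e_k}\to\id$ on $L_2((-L,L))$, so that $F_{\e_k}\to f$ and $G_{\e_k}\to g$ strongly in $L_2$ while $\p_x F_{\e_k}\rightharpoonup\p_x f$ and $\p_x G_{\e_k}\rightharpoonup\p_x g$ weakly in $L_2$; $f_{0\e_k}\to f_0$ and $g_{0\e_k}\to g_0$ in $L_2((-L,L))$; and the $H^1$-bound of Proposition~\ref{P:4}\,$(b)$, which forces the $\e_k$-error and commutator terms to vanish.

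The crucial point is the passage to the limit in the two $\zeta$-weighted quadratic dissipation functionals on the left-hand side. Writing $f_\e|(1+R)\p_x f_\e+R\p_x G_\e|^2\zeta^2=\big|\zeta\sqrt{f_\e}\,\big((1+R)\p_x f_\e+R\p_x G_\e\big)\big|^2$, one observes that $\sqrt{f_{\e_k}}\to\sqrt f$ in $L_2((0,T)\times(-L,L))$ (because $|\sqrt a-\sqrt b|^2\le|a-b|$ for $a,b\ge0$), while $\sqrt{f_{\e_k}}\big((1+R)\p_x f_{\e_k}+R\p_x G_{\e_k}\big)$ is bounded in $L_2$ by \eqref{gaston}; hence this sequence converges weakly in $L_2$ to a limit which, by the strong convergence of $\sqrt{f_{\e_k}}$ and the weak convergence of the gradients, must equal $\sqrt f\,\big((1+R)\p_x f+R\p_x g\big)$. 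Since multiplication by $\zeta\in W^1_4((-L,L))\hookrightarrow C([-L,L])$ is bounded on $L_2$, the weak lower semicontinuity of the $L_2$-norm then bounds the dissipation terms of \eqref{REE} by the $\liminf$ of their $\e_k$-counterparts; the $g$-component is treated identically. For the cubic right-hand side one uses that $f_{\e_k},g_{\e_k}$ are bounded in $L_\infty((0,T);L_2((-L,L)))\cap L_2((0,T);H^1((-L,L)))$, hence — by the Gagliardo-Nirenberg inequality in one space dimension — in $L_{10/3}((0,T)\times(-L,L))$, so that $f_{\e_k}\big((1+R)f_{\e_k}+RG_{\e_k}\big)^2$ and $g_{\e_k}(F_{\e_k}+g_{\e_k})^2$ are equi-integrable and converge in $L_1$; since these cubic expressions are moreover bounded in $L_1((0,T);L_2((-L,L)))$, the integrals against $|\p_x\zeta|^2\in L_2((-L,L))$ converge (the case of a Lipschitz $\zeta$, which is the only one used in Section~\ref{Sec2}, being immediate from Lebesgue's theorem). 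Finally, the term evaluated at time $T$ is dealt with, exactly as in Proposition~\ref{P:4}, through the weak lower semicontinuity of the convex $\zeta^2$-weighted quadratic form together with the weak continuity of $t\mapsto(f(t),g(t))$ in $L_2((-L,L);\R^2)$. The main obstacle is therefore the identification of the weak $L_2$-limits of the products of the weakly convergent gradients with the merely strongly (not uniformly) convergent coefficients $\sqrt{f_{\e_k}}$, $\sqrt{g_{\e_k}}$ in the dissipation integrals.
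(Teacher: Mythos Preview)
Your strategy coincides with the paper's: test the regularized equations \eqref{eq:A1} against $\zeta^2\big[(1+R)f_\e+RG_\e\big]$ and $R\zeta^2\big[F_\e+g_\e\big]$, split the cross terms coming from $\p_x(\zeta^2)$ by Young's inequality (this is what produces the constant $4$ on the right), and pass to the limit $\e_k\to0$ by weak lower semicontinuity for the dissipation and strong convergence for the rest. Your identification of the weak $L_2$-limits of $\sqrt{f_{\e_k}}\,\big[(1+R)\p_xf_{\e_k}+R\p_xG_{\e_k}\big]$ and $\sqrt{g_{\e_k}}\,\p_x(F_{\e_k}+g_{\e_k})$ is exactly what the paper records as \eqref{P3}--\eqref{P4}. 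For the cubic right-hand side the paper simply uses the convergence of $f_{\e_k},g_{\e_k},F_{\e_k},G_{\e_k}$ in $L_2(0,T;C([-L,L]))$ from \cite{ELM11}, which is more direct than your $L_{10/3}$ interpolation, but either works.

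There is, however, one genuine gap. You dismiss the commutator-type error terms in a single clause (``as in \cite{ELM11} these are controlled by the $\e$-uniform $H^1$-bounds''), but the localized computation generates a contribution that is \emph{absent} from the global estimate in \cite{ELM11} and is \emph{not} controlled by $H^1$-bounds alone. After using $f_\e=(1-\e^2\p_x^2)F_\e$ and $g_\e=(1-\e^2\p_x^2)G_\e$ and integrating by parts against $\zeta^2$, one is left with
\[
2R\,\e^2\int_0^T\!\!\int_{-L}^L\big(\p_tF_\e\,\p_xG_\e+\p_xF_\e\,\p_tG_\e\big)\,\zeta\,\p_x\zeta\,dx\,ds,
\]
and making this vanish requires two further ingredients: (i) a bound on $(\p_tF_\e,\p_tG_\e)$ in $L_2(0,T;(W^1_4)')$, obtained by writing \eqref{eq:A1} in divergence form with fluxes $\sqrt{f_\e}\,U_\e-R\e\,\p_xG_\e$ bounded in $L_2(0,T;L_4)$; and (ii) the elliptic estimate $\|\p_xF_\e\|_{W^1_4}\le C\e^{-5/4}\|\p_xf_\e\|_2$ (similarly for $G_\e$), obtained from $\p_xF_\e-\e^2\p_x^3F_\e=\p_xf_\e$ with $\p_xF_\e(\pm L)=0$ together with Gagliardo--Nirenberg. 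The net factor is $\e^{2}\cdot\e^{-5/4}=\e^{3/4}\to0$. This step is the actual technical heart of the lemma; it is also the reason the paper first takes $\zeta\in C_0^\infty((-L,L))$ (so that $\p_x(\zeta\p_x\zeta)$ makes sense) and extends to $W^1_4$ only at the end by density.
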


\begin{proof} We set
\begin{align*}
& F_\e := \mathcal{R}_\e[f_\e]\ , \qquad G_\e := \mathcal{R}_\e[g_\e], \\
& U_\e := \sqrt{f_\e} \p_x \left[ (1+R) f_\e + R G_\e \right], \qquad V_\e := \sqrt{g_\e} \p_x \left[ F_\e + g_\e \right],
\end{align*}
and prove first the claim \eqref{REE} for $\zeta\in C^\infty_0((-L,L)).$ We multiply the first equation of \eqref{eq:A1} by $((1+R) f_\e + R G_\e)\zeta^2$ and integrate over $(-L,L)$ to obtain
\begin{align}
\int_{-L}^L \p_t f_\e\left( (1+R)f_\e+RG_\e \right)\zeta^2\, dx = & -\int_{-L}^L \sqrt{f_\e}  U_\e \p_x\left[\left( (1+R)f_\e+RG_\e \right)\zeta^2\right]\, dx + I_{1,\e} \label{eq:zz1}
\end{align}
with
$$
I_{1,\e}:=\e R\int_{-L}^L \p_x G_\e \p_x\left[\left( (1+R)f_\e+RG_\e \right)\zeta^2\right]\, dx.
$$
Similarly, multiplying the second equation of \eqref{eq:A1} by $R (F_\e+g_\e)\zeta^2$ and integrating over $(-L,L)$ give 
\begin{equation}
R\int_{-L}^L \p_t g_\e \left( F_\e + g_\e \right)\zeta^2\, dx = - RR_\mu \int_{-L}^L \sqrt{g_\e} V_\e \p_x\left[\left( F_\e + g_\e \right)\zeta^2\right]\, dx + I_{2,\e}
\label{eq:zz3}
\end{equation}
with 
$$
I_{2,\e}:=\e RR_\mu\int_{-L}^L\p_x F_\e \p_x\left[\left( F_\e + g_\e \right)\zeta^2\right]\, dx.
$$
We now observe that 
\begin{align}
& \int_{-L}^L \p_t f_\e\left( (1+R)f_\e+RG_\e \right)\zeta^2\, dx + R\int_{-L}^L \p_t g_\e \left( F_\e + g_\e \right)\zeta^2\, dx \nonumber \\
& \qquad = \frac{1+R}{2} \frac{d}{dt} \|f_\e\zeta\|_2^2 + \frac{R}{2} \frac{d}{dt} \|g_\e\zeta\|_2^2 + R J_{\e},\label{eq:yy1}
\end{align}
with 
\begin{align*}
J_{\e}:=& \int_{-L}^L\left( G_\e \p_t f_\e + F_\e \p_t g_\e \right)\zeta^2\, dx\\
=&\frac{d}{dt}\int_{-L}^L\big(F_\e G_\e+\e^2\p_x F_\e\p_x G_\e\big)\zeta^2\, dx+2\e^2\int_{-L}^L\left( G_\e \p_x\p_t F_\e + F_\e\p_x\p_tG_\e\right)\zeta\p_x\zeta\, dx.
\end{align*}
Recalling that $\zeta\in C^\infty_0((-L,L)),$ we have
\begin{align*}
\int_{-L}^L\big(F_\e G_\e+\e^2\p_x F_\e\p_x G_\e\big)\zeta^2\, dx  =& \frac{1}{2}\int_{-L}^L\big(F_\e G_\e+\e^2\p_x F_\e\p_x G_\e\big)\zeta^2\, dx \\
&   +\frac{1}{2}\int_{-L}^L\big(F_\e G_\e+\e^2\p_x F_\e\p_x G_\e\big)\zeta^2\, dx\\
 =& \frac{1}{2}\int_{-L}^L\big(F_\e G_\e-\e^2\p_x^2 F_\e  G_\e\big)\zeta^2\, dx \\
&   +\frac{1}{2}\int_{-L}^L\big(F_\e G_\e-\e^2 F_\e\p_x^2 G_\e\big)\zeta^2\, dx\\
&  - \e^2 \int_{-L}^L G_\e \p_x F_\e \zeta \p_x\zeta\, dx - \e^2 \int_{-L}^L F_\e\p_x G_\e\zeta\p_x\zeta\, dx\\
=& \frac{1}{2} \int_{-L}^L\big(F_\e g_\e+G_\e f_\e\big)\zeta^2\, dx+\e^2 \int_{-L}^LF_\e G_\e\p_x(\zeta\p_x\zeta)\, dx,
\end{align*}
while
\begin{align*}
\int_{-L}^L\big(G_\e \p_x\p_tF_\e +F_\e\p_x\p_tG_\e\big)\zeta\p_x\zeta\, dx  =& -\int_{-L}^L\big( \p_tF_\e \p_xG_\e+\p_xF_\e\p_tG_\e\big)\zeta\p_x\zeta\, dx\\
&  -\int_{-L}^L\big( G_\e \p_t F_\e +F_\e \p_tG_\e\big)\p_x(\zeta\p_x\zeta)\, dx\\
 =&-\int_{-L}^L\big( \p_tF_\e \p_xG_\e+\p_xF_\e\p_tG_\e\big)\zeta\p_x\zeta\, dx \\
&   -\frac{d}{dt}\int_{-L}^L F_\e G_\e\p_x(\zeta\p_x\zeta)\, dx.
\end{align*}

We end up with the following formula for $J_\e$:
\begin{align*}
J_\e  = &\frac{1}{2}\frac{d}{dt}\int_{-L}^L\big(F_\e g_\e+G_\e f_\e\big)\zeta^2\, dx-\e^2\frac{d}{dt}\int_{-L}^L F_\e G_\e\p_x(\zeta\p_x\zeta)\, dx\\
&  -2\e^2\int_{-L}^L\big( \p_tF_\e \p_xG_\e+\p_xF_\e\p_tG_\e\big)\zeta\p_x\zeta\, dx.
\end{align*}
After integration over $(0,T),$ it follows from \eqref{eq:zz1}-\eqref{eq:yy1} and the previous identity that
\begin{align*}
&\hspace{-0.3cm}\frac{1+R}{2} \|f_\e(T)\zeta\|_2^2-\frac{1+R}{2} \|f_\e(0)\zeta\|_2^2 + \frac{R}{2}  \|g_\e(T)\zeta\|_2^2-\frac{R}{2}  \|g_\e(0)\zeta\|_2^2 \\
&+\frac{R}{2} \int_{-L}^L\big(F_\e g_\e+G_\e f_\e\big)(T)\zeta^2\, dx- \frac{R}{2} \int_{-L}^L\big(F_\e g_\e+G_\e f_\e\big)(0)\zeta^2\, dx \\
&-R\e^2 \int_{-L}^L (F_\e G_\e)(T)\p_x(\zeta\p_x\zeta)\, dx+R\e^2 \int_{-L}^L (F_\e G_\e)(0)\p_x(\zeta\p_x\zeta)\, dx\\
&-2R\e^2\int_0^T\int_{-L}^L\big( \p_tF_\e \p_xG_\e+\p_xF_\e\p_tG_\e\big)\zeta\p_x\zeta\, dx\, ds\\
=&-\int_0^T\int_{-L}^L \sqrt{f_\e} U_\e \p_x\left[\left( (1+R)f_\e+RG_\e \right)\zeta^2\right]\, dx\, ds+\int_0^T I_{1,\e}\, ds\\
&- RR_\mu \int_0^T\int_{-L}^L \sqrt{g_\e} V_\e \p_x\left[\left( F_\e + g_\e \right)\zeta^2\right]\, dx\, ds + \int_0^TI_{2,\e}\, ds.
\end{align*}
Using Young's inequality we get
\begin{align}
&\hspace{-0.3cm}\frac{1+R}{2} \|f_\e(T)\zeta\|_2^2 + \frac{R}{2}  \|g_\e(T)\zeta\|_2^2 +\frac{R}{2} \int_{-L}^L\big(F_\e g_\e+G_\e f_\e\big)(T)\zeta^2\, dx\nonumber\\
&+K_\e(T) +\frac{1}{2}\int_0^T\int_{-L}^L \left[ U_\e^2 + RR_\mu V_\e^2 \right] \zeta^2\, dx\, ds\nonumber \\
\leq &\frac{1+R}{2} \|f_\e(0)\zeta\|_2^2 +\frac{R}{2}  \|g_\e(0)\zeta\|_2^2 + \frac{R}{2} \int_{-L}^L\big(F_\e g_\e+G_\e f_\e\big)(0)\zeta^2\, dx \nonumber\\
&+2\int_0^T\int_{-L}^L \big[f_\e  \left| (1+R)f_\e+RG_\e \right|^2 +RR_\mu g_\e \left| F_\e + g_\e \right|^2\big]|\p_x\zeta|^2\, dx\, ds \label{Mozart}
\end{align}
with
\begin{align*}
 K_\e(T):=&-R\e^2 \int_{-L}^L (F_\e G_\e)(T)\p_x(\zeta\p_x\zeta)\, dx+R\e^2 \int_{-L}^L (F_\e G_\e)(0)\p_x(\zeta\p_x\zeta)\, dx\\
&-2R\e^2\int_0^T\int_{-L}^L\big( \p_tF_\e \p_xG_\e+\p_xF_\e\p_tG_\e\big)\zeta\p_x\zeta\, dx\, ds
 - \int_0^T \left( I_{1,\e} + I_{2,\e} \right)\, ds.
\end{align*}

According to \cite{ELM11}, the convergences of $(f_{\e_k})_k$ and $(g_{\e_k})_k$ towards $f$ and $g$ actually take place in stronger topologies than stated in Proposition \ref{P4}. In fact,   for all $T>0$ 
 \begin{align}
 & f_{\e_k}\to f,\quad  F_{\e_k} \to f, \quad  g_{\e_k}\to g,\quad   G_{\e_k}\to g\qquad \text{in $L_2(0,T; C([-L,L])),$}\label{P1}\\
 & f_{\e_k}(0)\to f_0,\quad  F_{\e_k}(0) \to f_0, \quad  g_{\e_k}(0)\to g_0,\quad   G_{\e_k}(0)\to g_0\qquad \text{in $L_2((-L,L)),$}\label{P2}\\
 & U_{\e_k} \rightharpoonup U := \sqrt{f}\left((1+R)\p_xf+R\p_xg\right)\qquad \text{in $L_2((0,T)\times(-L,L))$}\label{P3}\\
 & V_{\e_k} \rightharpoonup V:= \sqrt{g}(\p_xf+\p_xg)\qquad \text{in $L_2((0,T)\times(-L,L)).$}\label{P4}
 \end{align}
Furthermore it follows from \cite[Lemmas~2.3 \&~2.5]{ELM11} that
\begin{equation}
((f_{\e_k} , g_{\e_k},F_{\e_k}, G_{\e_k}))_{k} \;\;\text{ are bounded in }\;\; L_\infty(0,T;L_2(-L,L))
 \cap L_2(0,T;H^1(-L,L)). \label{P5}
\end{equation}
We also infer from \eqref{P1} that
\begin{equation}
\lim_{k\to\infty} \big\{ \|(f_{\e_k}-f)(T)\|_2 + \|(F_{\e_k}-f)(T)\|_2 + \|(g_{\e_k}-g)(T)\|_2 + \|(G_{\e_k}-g)(T)\|_2 
\big\} = 0 \label{P6}
\end{equation}
for almost all $T>0$. 
We may then take $\e=\e_k$ in \eqref{Mozart} and let $k\to\infty$ to deduce from \eqref{P1}-\eqref{P6} that, for almost all $T>0$,
\begin{align}
&\hspace{-0.3cm}\frac{1+R}{2} \|f(T)\zeta\|_2^2 + \frac{R}{2}  \|g(T)\zeta\|_2^2 + R \int_{-L}^L (fg)(T)\zeta^2\, dx \nonumber\\
& + \frac{1}{2}\int_0^T\int_{-L}^L \left[ U^2 + RR_\mu V^2 \right] \zeta^2\, dx\, ds\nonumber \\
\leq& \frac{1+R}{2} \|f_0\zeta\|_2^2 +\frac{R}{2}  \|g_0\zeta\|_2^2 + R \int_{-L}^L f_0 g_0 \zeta^2\, dx \nonumber\\
&+2\int_0^T\int_{-L}^L \big[f  \left| (1+R)f + R g \right|^2 +RR_\mu g \left| f + g \right|^2\big]|\p_x\zeta|^2\, dx\, ds \label{Mozart2}
\end{align}
provided we establish that
\begin{equation}\label{Mo1}
\lim_{k\to \infty} K_{\e_k}(T) = 0 .
\end{equation}

\paragraph{\textbf{The term $K_\e(T)$}}

We are left with proving \eqref{Mo1} and actually identifying the behavior of $K_\e(T)$ as $\e\to 0$. Owing to \eqref{P2} and \eqref{P6}, it is clear that 
\begin{equation}
\lim_{k\to \infty} R \e_k^2 \int_{-L}^L (F_{\e_k} G_{\e_k})(T)\p_x(\zeta\p_x\zeta)\, dx = \lim_{k\to\infty} R\e_k^2 \int_{-L}^L (F_{\e_k} G_{\e_k})(0)\p_x(\zeta\p_x\zeta)\, dx = 0 \label{P7}
\end{equation}
for almost all $T>0$. It next readily follows from \eqref{P5} that 
\begin{equation}
\lim_{\e\to 0} \int_0^T I_{1,\e}\, dt = \lim_{\e\to 0} \int_0^T I_{2,\e}\, dt = 0 . \label{P8}
\end{equation}
Finally, since 
\[
\p_t f_\e = \partial_x \big( \sqrt{f_\e} U_\e - R \e \p_x G_\e \big),
\]
the boundedness \eqref{P3} and \eqref{P5} of $(U_\e)_\e$ in $L_2((0,T)\times (-L,L))$ and $(f_\e)_\e$ in $L_\infty(0,T;L_2(-L,L))$ implies that  $\left( \sqrt{f_\e} U_\e - R \e \p_x G_\e \right)_\e$ is bounded in $L_2(0,T;L_4(-L,L)).$ Consequently, the sequence
 $\left( \p_t f_\e \right)_\e$ is bounded in $L_2(0,T;(W_4^1(-L,L))')$ and so is $\left( \p_t g_\e \right)_\e$ by a similar argument. Owing to the properties of $\left( 1 - \e^2 \p_x^2 \right)^{-1}$ we conclude that 
\begin{equation}\label{Mo11}
\text{ $\left( \p_t F_\e \right)_\e, \left( \p_t G_\e\right)_\e$ are bounded in $L_2(0,T;(W_4^1)')$,}
\end{equation}
see \cite[Lemma~3.1]{ELM11} for a similar result. Now, since $\zeta\in C_0^\infty((-L,L))$ and $W_4^1(-L,L)$ is an algebra, we infer from \eqref{Mo11} that 
\begin{align}
&2R\e^2 \left| \int_0^T\int_{-L}^L\big( \p_tF_\e \p_xG_\e+\p_xF_\e\p_tG_\e\big)\zeta\p_x\zeta\, dx\, ds\right|\nonumber \\
 &\leq C \e^2 \int_0^T \Big( \|\p_t F_\e\|_{(W_4^1)'}  \|\p_x G_\e \|_{W^1_4}
+ \|\p_t G_\e\|_{(W_4^1)'}  \|\p_x F_\e\|_{W^1_4} \Big) \|\zeta\p_x\zeta\|_{W^1_4} \, ds \nonumber \\
 &\leq C(\zeta,T) \e^2 \left[\int_0^T   \Big(\|\p_xG_\e \|_{W^1_4}^2+    \|\p_xF_\e \|_{W^1_4}^2 \Big) \, ds\right]^{1/2}   . \label{P9}
\end{align}
Now, owing to \eqref{spirou}, for almost all $t\in(0,T)$ the function $\p_x F_\e(t)$ solves
$$
\p_x F_\e - \e^2 \p_x^2 \p_x F_\e = \p_x f_\e \;\;\text{ in }\;\; (-L,L)  , \qquad \p_x F_\e(\pm L) =0  ,
$$
which implies that
$$
\|\p_x F_\e\|_2^2 + \e^2 \|\p_x^2 F_\e\|_2^2 + \e^4\|\p_x^3 F_\e\|_2^2 \le C \|\p_x f_\e\|_2^2  .
$$
These estimates along with the Gagliardo-Nirenberg inequality \cite[Theorem 1]{N66} give
\begin{align*}
\|\p_x F_\e\|_{W_4^1} & \le C \left( \|\p_x F_\e\|_4 + \|\p_x^2 F_\e\|_4 \right) \\
& \le C \Big( \|\p_x^2 F_\e\|_2^{1/4} \|\p_x F_\e\|_2^{3/4} + \|\p_x^3 F_\e\|_2^{1/4} \|\p_x^2 F_\e\|_2^{3/4} + \|\p_x^2 F_\e\|_2 \Big) \\
& \le C \e^{-5/4} \|\p_x f_\e \|_2\ .
\end{align*}
A similar estimate being valid for $\|\p_x G_\e\|_{W_4^1}$ with $\|\p_x g_\e \|_2$ instead of $\|\p_x f_\e \|_2$, we deduce from \eqref{P5} and \eqref{P9} that
\begin{align}
& \hspace{-0.3cm}2R\e^2 \left| \int_0^T\int_{-L}^L\big( \p_tF_\e \p_xG_\e+\p_x F_\e\p_tG_\e\big)\zeta\p_x\zeta\, dx\, ds\right| \nonumber \\
\le & C(\zeta,T) \e^{3/4} \left[ \int_0^T \left( \|\p_x f_\e\|_2^2 + \| \p_x g_\e \|_2^2 \right)\, ds \right]^{1/2} \le C(\zeta,T) \e^{3/4}  . \label{P10}
\end{align}
Combining \eqref{P7}, \eqref{P8}, and \eqref{P10} gives the claim \eqref{Mo1} and completes the proof of \eqref{REE} for $\zeta\in C^\infty_0(\R),$ its validity for all $T>0$ being obtained by a lower semicontinuity argument. According to the regularity of $(f,g)$ the extension of Lemma~\ref{L:4} to all functions $\zeta\in W_4^1((-L,L))$ follows by a density argument.
 \end{proof}\medskip
 
 \subsection{The limit $L\to\infty$} For each positive $L$, we denote  the couple found in Proposition \ref{P:4} by $(f^L,g^L)$.
The family $((f^L,g^L))_L$ satisfies the same bounds as the family $((f_\e,g_\e))_\e$, so that performing the limit $L\to\infty$
may be done as the limit $\e\to0$, the only difference being the unboundedness of the domain which one has to cope with.
To this end we derive the following lemma which controls the behavior at infinity of  $(f^L,g^L)$. 
 \begin{lemma} \label{L36}It holds that
\begin{align}\label{M2}
\int_{-L/2}^{L/2}\Big(f^L+\frac{R}{R_\mu}g^L\Big)(T)x^2\, dx
\leq &  \int_{-L}^L\Big(f_0+\frac{R}{R_\mu}g_0\Big)x^2\, dx
+ T\E(f_0,g_0)
\end{align}
for all $T>0.$
 \end{lemma}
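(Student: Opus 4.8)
The plan is to test the weak formulation \eqref{FEQ}--\eqref{GEQ} for $(f^L,g^L)$ with a carefully chosen weight function in place of the test function $\xi$. Since we want to estimate the second moment, the natural candidate is $\xi(x)=x^2$, but this does not belong to $W^1_4((-L,L))$ in an admissible way at the endpoints; more importantly we only want to control the integral over the smaller interval $(-L/2,L/2)$. So first I would introduce a cut-off: let $\chi\in C^\infty(\R)$ satisfy $0\le\chi\le1$, $\chi\equiv1$ on $[-1/2,1/2]$, $\chi\equiv0$ outside $(-1,1)$, and set $\xi_L(x):=x^2\chi(x/L)$, which lies in $W^1_4((-L,L))$ and vanishes near $x=\pm L$. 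Plugging $\xi_L$ into \eqref{FEQ} and the analogous identity for $g^L$ (the latter multiplied by $R/R_\mu$), and adding, gives
\begin{align*}
&\int_{-L}^L\Big(f^L+\frac{R}{R_\mu}g^L\Big)(T)\,\xi_L\,dx - \int_{-L}^L\Big(f_0+\frac{R}{R_\mu}g_0\Big)\xi_L\,dx\\
&\qquad = -\int_0^T\int_{-L}^L\Big[\tfrac{1}{R_\mu}f^L\big((1+R)\p_xf^L+R\p_xg^L\big)+Rg^L(\p_xf^L+\p_xg^L)\Big]\p_x\xi_L\,dx\,dt.
\end{align*}
The left-hand side bounds $\int_{-L/2}^{L/2}(f^L+\frac{R}{R_\mu}g^L)(T)x^2\,dx$ from below (using $\xi_L\ge0$ and $\xi_L\le x^2$ on the support of $f_0,g_0$), so the whole game is to show the right-hand side is $\le T\,\E(f_0,g_0)$ plus harmless terms that vanish as $L\to\infty$.

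The second step is to analyze $\p_x\xi_L=2x\chi(x/L)+x^2\chi'(x/L)/L$. On $(-L/2,L/2)$ this is just $2x$; the correction term is supported in $L/2\le|x|\le L$ and is bounded by $C L$ there. For the main part, the key algebraic observation—exactly the one underlying the moment estimate in \cite[Theorem 4.1]{LM15}—is that
\[
\tfrac{1}{R_\mu}f\big((1+R)\p_xf+R\p_xg\big)\cdot 2x + Rg(\p_xf+\p_xg)\cdot 2x
\]
can be rewritten, after using $2xf\p_xf=\p_x(xf^2)-f^2$ etc., as a total $x$-derivative (integrating to something nonnegative or controlled) minus a quadratic form in $(f,g)$ which is bounded above by a multiple of $f^2+R(f+g)^2$, i.e.\ by the energy density. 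Concretely one expects the identity to produce $-\frac{1}{R_\mu}\p_x(\cdots)-[(1+R)f^2+2R\cdot\tfrac{1}{2}(\text{cross})+\cdots]$ and after integrating the total derivative vanishes by the support/cut-off properties, leaving precisely $\int f^2+R(f+g)^2\,dx=2\E$ integrated over $(0,T)$. This is where I expect to lean directly on the computation already done in \cite{LM15}; the point is simply that the reduced system \eqref{eq:S2} is a $2$-Wasserstein gradient flow of $\E$ with the weighted mobility built from $R_\mu$, which is exactly why the combination $f_0+\frac{R}{R_\mu}g_0$ appears in \eqref{BCB} and \eqref{M2}.

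The third step is to dispose of the $L$-dependent error terms: the contribution of $x^2\chi'(x/L)/L$ against the parabolic integrand is controlled, via Cauchy--Schwarz, by $C L\big(\int_0^T\int_{L/2<|x|<L}(f^L|U^L|^2+g^L|V^L|^2)dx\,dt\big)^{1/2}\big(\int_0^T\int f^L+g^L\big)^{1/2}$, where $U^L,V^L$ are the flux variables; since by Proposition~\ref{P:4}(c) the fluxes are in $L_2((0,\infty)\times\R)$ uniformly and the masses are conserved, each factor is finite, but to kill the prefactor $L$ one needs the tail of the flux integral to decay like $L^{-2}$, which is \emph{not} automatic. The honest fix—and I expect this to be the main obstacle—is instead to keep the cut-off term with the right sign: one checks that the boundary layer term, after the same integration by parts that produced the total derivative, contributes a term of a definite (favorable) sign plus a term absorbed by the energy, so that no $L$-blow-up occurs at all; alternatively one first proves \eqref{M2} with $L$ replaced by a fixed $\rho<L$ on the left and lets $L\to\infty$ first (using the weak convergence $f^L\to f$ and conservation of mass to pass to the limit in the moment, via a truncation and monotone convergence), then lets $\rho\to\infty$. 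Either way, once the error is handled, combining the lower bound on the left-hand side with the upper bound $T\,\E(f_0,g_0)$ on the right-hand side, and replacing $\xi_L$ by its minorant $x^2$ on $(-L/2,L/2)$, yields exactly \eqref{M2}. A lower-semicontinuity/monotone-convergence argument in $L$ then transfers the bound to the limiting solution $(f,g)$, which is how it gets used (together with \eqref{BCB}) in the proof of Theorem~\ref{MT1}.
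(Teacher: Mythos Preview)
Your approach has the right core idea but overcomplicates the execution and leaves a genuine gap in the ``third step''. The paper's proof is considerably shorter because of two choices you did not make.

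First, the paper observes that after adding \eqref{FEQ} tested with $\xi$ and \eqref{GEQ} tested with $\tfrac{R}{R_\mu}\xi$, the combined flux is an \emph{exact} derivative:
\[
f^L\big((1+R)\p_xf^L+R\p_xg^L\big)+Rg^L\big(\p_xf^L+\p_xg^L\big)=\tfrac{1}{2}\,\p_x\big[(f^L)^2+R(f^L+g^L)^2\big].
\]
(Your displayed integrand carries a spurious $1/R_\mu$ on the first term.) Integrating by parts once more against $\p_x\xi$ therefore turns the right-hand side into $\tfrac{1}{2}\int_0^T\!\int_{-L}^L\big[(f^L)^2+R(f^L+g^L)^2\big]\xi''\,dx\,dt$, with no residual flux terms whatsoever.

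Second, rather than a smooth cutoff $x^2\chi(x/L)$, the paper takes the explicit piecewise-quadratic weight
\[
\Phi(x)=\begin{cases} x^2, & |x|\le L/2,\\[0.5ex] 2L|x|-x^2-3L^2/4, & L/2\le |x|\le L,\end{cases}
\]
which is $C^1$, satisfies $\Phi'(\pm L)=0$, obeys $x^2{\bf 1}_{[-L/2,L/2]}\le\Phi\le x^2$, and, crucially, has $\Phi''\le 2$ \emph{everywhere} on $(-L,L)$ (in fact $\Phi''=-2$ on the outer pieces). Inserting $\Phi''\le 2$ into the display above and using Proposition~\ref{P:4}(c) gives the bound $T\E(f_0,g_0)$ directly, and the pointwise bounds on $\Phi$ finish \eqref{M2}. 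No boundary-layer analysis is needed at all.

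Your route can still be salvaged: if you integrate by parts globally instead of splitting $\p_x\xi_L=2x\chi+\text{correction}$, you land on $\xi_L''$, which is uniformly bounded in $L$, and the argument closes (with a constant depending on $\chi$). But as written, your ``third step'' correctly diagnoses that the Cauchy--Schwarz bound fails (there is no $L^{-2}$ tail decay of the flux available), and the two fixes you sketch remain vague: the first (``keep the boundary-layer term with the right sign'') is precisely the paper's observation that $\Phi''\le 2$ also on $L/2<|x|<L$, while the second (fix $\rho<L$, let $L\to\infty$) is circuitous since \eqref{M2} is a statement at \emph{fixed} $L$ about $(f^L,g^L)$. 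Likewise, your closing paragraph about transferring the bound to the limiting $(f,g)$ is not part of this lemma.
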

 \begin{proof}
 We define the function
 \[
\Phi(x)=
\left\{
\begin{array}{cll}
-2Lx-x^2-3L^2/4&,& -L \leq x\leq- L/2,\\
x^2&,& -L/2\leq x\leq L/2,\\
2Lx-x^2-3L^2/4&,& L/2\leq x\leq L.
\end{array}
\right.
\]
We take $\xi=\Phi$ in \eqref{FEQ} and $\xi=R\Phi/R_\mu$ in \eqref{GEQ} to obtain that,  using integration by parts
and the bound $\Phi''\le2$,
\begin{align*}
\int_{-L}^L\Big(f^L +\frac{R}{R_\mu}g^L\Big)(T)\Phi\, dx= &\int_{-L}^L\Big(f_0+\frac{R}{R_\mu}g_0\Big)\Phi\, dx
+\frac{1}{2}\int_0^T\int_{-L}^L\big((f^L)^2+ R(f^L+g^L)^2\big)\Phi''\, dx\, dt\\
\leq &  \int_{-L}^L\Big(f_0+\frac{R}{R_\mu}g_0\Big)\Phi\, dx
+ T\E(f_0,g_0).
\end{align*}
In addition 
\[
x^2{\bf 1}_{[-L/2,L/2]}\leq \Phi(x)\leq x^2,\qquad \text{for $x\in[-L,L],$}
\]
and the claim follows.
\end{proof}

Thanks to  Lemma \ref{L36} we may argue as   in the  proof of Proposition \ref{P4}, see \cite{ELM11},
to perform the limit $L\to\infty$ and complete the proof of Theorem \ref{T:1}.
We in particular use Lemma \ref{L36} to establish the entropy inequality $(b)$ as well as the conservation of mass in Theorem \ref{T:1}.
\bibliographystyle{abbrv}
\bibliography{LM_FSP} 

\end{document}